\documentclass[11pt]{amsart}
\usepackage{latexsym}
\usepackage{fullpage}
\usepackage{amssymb}
\usepackage{amscd}
\usepackage{float}
\usepackage{graphicx}
\usepackage{amsfonts}
\usepackage{pb-diagram}
\usepackage{amsmath,amscd}
\usepackage{caption}
\usepackage{subcaption}
\usepackage{xcolor}

\setlength{\topmargin}{0cm}
\setlength{\textheight}{22cm}
\setlength{\textwidth}{16cm}
\setlength{\oddsidemargin}{-0.1cm}
\setlength{\evensidemargin}{-0.1cm}

\newtheorem{thm}{Theorem}

\newtheorem{prop}{Proposition}
\newtheorem{defn}{Definition}
\newtheorem{remark}{Remark}

\makeatletter
\@namedef{subjclassname@2020}{\textup{2020} Mathematics Subject Classification}
\makeatother

\begin{document}

\title[Knotoids, pseudo knotoids, Braidoids and pseudo braidoids on the Torus]
  {Knotoids, pseudo knotoids, Braidoids and pseudo braidoids on the Torus}

\author{Ioannis Diamantis}
\address{China Agricultural University,
International College Beijing, No.17 Qinghua East Road, Haidian District,
Beijing, {100083}, P. R. China.}
\email{ioannis.diamantis@hotmail.com}



\setcounter{section}{-1}

\date{}

\begin{abstract}
In this paper we study the theory of knotoids and braidoids and the theory of pseudo knotoids and pseudo braidoids on the torus T. In particular, we introduce the notion of {\it mixed knotoids} in $S^2$, that generalize the notion of mixed links in $S^3$, and we present an isotopy theorem for mixed knotoids. We then generalize the Kauffman bracket polynomial, $<;>$, for mixed knotoids and we present a state sum formula for $<;>$. We also introduce the notion of {\it mixed pseudo knotoids}, that is, multi-knotoids on two components with some missing crossing information. More precisely, we present an isotopy theorem for mixed pseudo knotoids and we extend the Kauffman bracket polynomial for pseudo mixed knotoids. Finally, we introduce the theories of {\it mixed braidoids} and {\it mixed pseudo braidoids} as counterpart theories of mixed knotoids and mixed pseudo knotoids respectively. With the use of the $L$-moves, that we also introduce here for mixed braidoid equivalence, we formulate and prove the analogue of the Alexander and the Markov theorems for mixed knotoids. We also formulate and prove the analogue of the Alexander theorem for mixed pseudo knotoids.
\bigbreak
\noindent 2020 {\it Mathematics Subject Classiffication.} 57K10, 57K12, 57K14, 57K35, 57K45, 57K99, 20F36, 20F38, 20C08.
\bigbreak
\noindent \textbf{Keywords.} knotoids, multi-knotoids, mixed knotoids, pseudo knotoids, mixed pseudo knotoids, Torus, Kauffman bracket, braidoids, mixed braidoids, pseudo braidoids, mixed pseudo braidoids, Alexander's theorem, L-moves, Markov's theorem.
\end{abstract}

\maketitle

\section{Introduction}\label{intro}

Knotoids were introduced by Turaev in \cite{T} as a generalization of 1-1 tangles by allowing the endpoints to be in different regions of the diagram. Equivalently, knotoids may be considered as open knotted curves in oriented surfaces $\Sigma$, generalizing the theory of classical knots. We call a knotoid in $\Sigma$, the equivalence class of knotoid diagrams in $\Sigma$ up to the equivalence relation induced by the standard Reidemeister moves that take place away from the endpoints of the knotoid diagram. Similar to the notion of classical links, we may extend knotoids to {\it multi-knotoids}, that is, a union of a knotoid diagram and a finite number of knot diagrams. In this paper we study the theory of knotoids on the torus T, by introducing the theory of {\it mixed knotoids}, that is, multi-knotoids that consist of two parts: the fixed part that forms the unknot and represents the complementary solid torus in $S^3$, and the moving part of the multi-knotoid, that represents the knotoid on T. Mixed knotoids may also be considered as {\it dichromatic multi-knotoids}, that is, multiknotoids on two components, the one forming the (fixed) unknot and the other forming a standard knotoid. We present an isotopy theorem for mixed-knotoids and we extend the Kauffman bracket polynomial for mixed-knotoids. 

\smallbreak

Pseudo knots were introduced in \cite{H} as projections on the 2-sphere with over/under information at some of the double points and pseudo knotoids were then introduced in \cite{D1} as standard knotoids with some missing crossing information, generalizing the notion of pseudo knots. In this paper we introduce and study the theory of {\it mixed pseudo knotoids} in order to study pseudo knotoid diagrams on the torus.  In particular, we present an isotopy theorem for mixed pseudo knotoids in $S^2$ and we then pass on the counterpart analogue of mixed knotoids and mixed pseudo knotoids, namely, the theories of {\it mixed braidoids} and {\it mixed pseudo braidoids} respectively. 

\smallbreak

Braidoids were introduced in \cite{GL2} where the authors present a braidoiding algorithm, as well as braidoid equivalence moves (see also \cite{GL1}). In this paper we present mixed braidoid equivalence moves and we formulate and prove the analogue of the Alexander theorem for mixed knotoids (see Theorem~\ref{alexbrst}). We also extend the notion of $L$-moves on mixed braidoids, with the use of which, we state and prove a geometric analogue of the Markov theorem for mixed braidoids (see Theorem~\ref{brisomarkst}). Similarly to pseudo braids, pseudo braidoids were introduced in \cite{D1} where the author presents pseudo braidoid equivalence moves and the analogue of the Alexander theorem for pseudo knotoids. With the use of $L$-moves on pseudo braidoids, the author also states and proves the analogue of the Markov theorem for pseudo braidoids. We conclude this paper by formulating and proving the analogue of the Alexander theorem for mixed pseudo knotoids.

\smallbreak

Pseudo knots comprise a relatively new and important model for DNA knots, since there exist cases of DNA knots that, after studying them by electron microscopes, it is hard to say a positive from a negative crossing. On the other hand, the theory of knotoids has become an important tool in the study of open proteins, and in particular, it has been used to classify entanglement in proteins, which are long chains of amino acids that sometimes form open ended knots \cite{DGBS, GGLDSK}. We believe that the results of this paper will find applications in various aspects of molecular biology.

\smallbreak

The paper is organized as follows: \S~\ref{prel} we recall all necessary results for knotoids and braidoids from \cite{T} and \cite{GL1, GL2}, and results concerning pseudo knotoids and pseudo braidoids from \cite{D1}. More precisely, we recall the definition of knotoid diagrams in $S^2$ and the analogue of the Reidemeister theorem for knotoids. We then recall the definition of standard braidoids, the braidoid equivalence moves and we also recall a braidoiding algorithm that is useful for the case of mixed knotoids. Then, using the $L$-moves, we state a geometric analogue of the Markov theorem for knotoids. Finally, we recall the definition of pseudo knotoids and pseudo braidoids from \cite{D1} and we present the analogue of the Reidemeister theorem for pseudo knotoids in $S^2$. We conclude this section with the analogues of the Alexander and Markov theorems for pseudo knotoids. In \S~\ref{knoidST} we introduce the notion of mixed knotoids in $S^2$ and we present the equivalence moves for mixed knotoids. As mentioned above, mixed knotoids generalize the notion of mixed links in $S^3$, which is a way of visualizing knots and links in arbitrary c.c.o. 3-manifolds. The theory of mixed knotoids in $S^2$ maybe considered as the theory of {\it torus-knotoids}, that is, knotoids in $S^2$ with a 1-handle attached, similar to the {\it spherical knotoids}, i.e. knotoids in $S^2$ and {\it planar knotoids}, i.e. knotoids in $\mathbb{R}^2$. In \S~\ref{kbp} we consider the torus as a {\it punctured torus} (see Figure~\ref{ptor1}) and we generalize the Kauffman bracket polynomial $<;>$ for mixed knotoids (Theorem~\ref{pkaufbst}). We also present a state sum formula for $<;>$ (Eq.~\ref{sskbst}) which is easier to be applied on a mixed knotoid diagram. In \S~\ref{mpkd1} we introduce the mixed pseudo knotoids as a generalization of pseudo links in $S^3$ and we present the analogue of the Reidemeister theorem for mixed pseudo knotoids. We also extend the Kauffman bracket polynomial for the case of of mixed pseudo knotoids. Finally, in \S~\ref{broid11} we present the theory of mixed braidoids, that is, the counterpart theory of mixed knotoids, and which generalize the notion of mixed braids of \cite{LR1}. More precisely, we start by translating the equivalence moves on mixed knotoids on the level of mixed braidoids and we introduce a (well-defined) closure operation on mixed braidoids. We show that the braidoiding algorithm of \cite{GL1} may be applied in the case of mixed braidoids, obtaining in that way the analogue of the Alexander theorem for mixed knotoids in $S^2$. We finally define $L$-moves on mixed braidoids, with the use of which, we formulate and prove a geometric analogue of the Markov theorem for mixed braidoids (Theorem~\ref{brisomarkst}). We conclude this paper by introducing the theory of mixed pseudo braidoids and by stating and proving the analogue of the Alexander theorem for mixed pseudo braidoids.

\bigbreak

\noindent \textbf{Acknowledgments}\ \ I would like to acknowledge several discussions with Dr. Nikolaos Koutsogoulas MD, which inspired and motivated me to write this paper. Moreover, financial support by China Agricultural University is gratefully acknowledged.

\section{Preliminaries}\label{prel}

In this section we recall all necessary results for knotoids, braidoids, pseudo knotoids and pseudo braidoids from \cite{T}, \cite{GL1, GL2} and \cite{D1}. More precisely, we recall the definition of knotoid diagrams in $S^2$ and the analogue of the Reidemeister theorem for knotoids. We then pass on the level of standard braidoids and we present the braidoid equivalence moves and we also recall a braidoiding algorithm that is useful for the case of mixed knotoids. Moreover, and using the $L$-moves, we present a geometric analogue of the Markov theorem for knotoids. We finally recall analogous results on pseudo knotoids and pseudo braidoids from \cite{D1}.

\subsection{Knotoids}\label{knd}

Knotoids were introduced in \cite{T} as open knotted curves in oriented surfaces. Classical knotoids, that is, knotoids in $S^2$, are open ended knot diagrams, generalizing the classical knots. More precisely:

\begin{defn}\rm
A {\it knotoid diagram} $K$ in an oriented surface $\Sigma$ is a generic immersion of the unit interval $[0, 1]$ into  
$\Sigma$ whose only singularities are transversal double points endowed with over/undercrossing data called crossings. The images of $0$ and $1$ under this immersion are called the endpoints of $K$ (leg and head of $K$ respectively) and are distinct from each other and from the double points. For an illustration is Figure~\ref{mkd}(a).
\end{defn}

\noindent Note that a knotoid diagram has a natural orientation from its leg to its head.

\begin{figure}[ht]
\begin{center}
\includegraphics[width=3.6in]{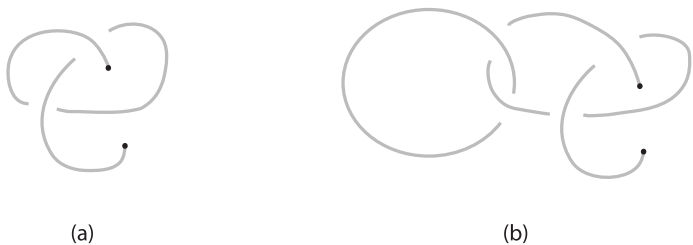}
\end{center}
\caption{(a) A knotoid and (b) a multi-knotoid.}
\label{mkd}
\end{figure}

A {\it knotoid} in $\Sigma$ is then an equivalence class of knotoid diagrams in $\Sigma$ up to the equivalence relation induced by the standard Reidemeister moves RI, RII \& RIII (see Figure~\ref{reid1}) and planar isotopy, that take place away from the endpoints. It is worth mentioning that we are not allowed to pull a strand that is adjacent to an endpoint, over or under a transversal arc, since this will result into a trivial knotoid diagram. These moves are illustrated in Figure~\ref{forb}, and they are called {\it forbidden moves} of knotoids.  

\begin{figure}[ht]
\begin{center}
\includegraphics[width=4.6in]{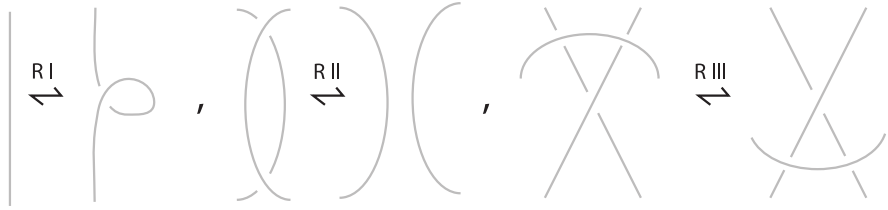}
\end{center}
\caption{The Reidemeister moves.}
\label{reid1}
\end{figure}

\begin{figure}[ht]
\begin{center}
\includegraphics[width=4.5in]{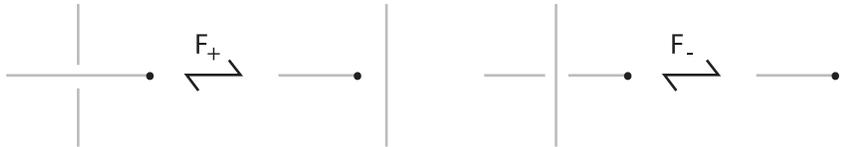}
\end{center}
\caption{The forbidden moves.}
\label{forb}
\end{figure}

Note that there are two situations where forbidden moves, seemingly occur as illustrated in Figure~\ref{ffm}. We shall call these moves
{\it fake forbidden moves}.

\begin{figure}[ht]
\begin{center}
\includegraphics[width=5.6in]{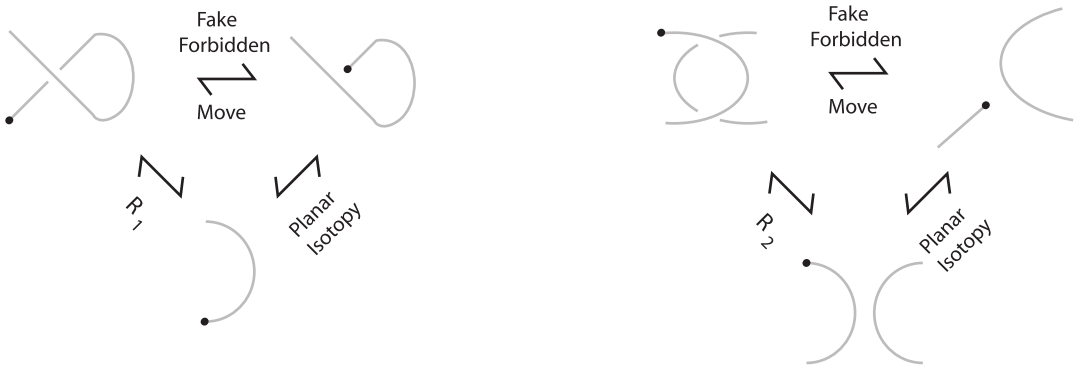}
\end{center}
\caption{Fake forbidden moves.}
\label{ffm}
\end{figure}

The definition of knotoids can be extended to linkoids and multi-knotoids as follows: a {\it linkoid diagram} is defined to be an immersion of a disjoint union of finitely many unit intervals whose images are knotoid diagrams and a {\it multi-knotoid diagram} is defined to be a union of a knotoid diagram and a finite number of knot diagrams (see Figure~\ref{mkd}(b)). Moreover, knotoid equivalence extends naturally to both linkoid diagrams and multi-knotoid diagrams, leading to the theory of linkoids and multi-knotoids. Finally, note that knotoids and linkoids are naturally oriented from the leg to the head.

\smallbreak

As noted in \cite{T}, the theory of knotoid diagrams suggests a new diagrammatic approach to knots, since every knotoid diagram determines a classical knot if we connect the endpoints of a knotoid diagram with an arc in $S^2$ that goes under or over each arc it meets. We call the resulting knots in $\mathbb{R}^3$, the {\it underpass closure} and the {\it overpass closure} of the knotoid respectively. Obviously, different closures of a knotoid may result in different knots. Thus, in order to represent knots via knotoid diagrams, we fix the closure type and we have the following result from \cite{T}:

\smallbreak

\begin{prop}
Assuming a specific closure type, there is a well-defined surjective map from knotoid diagrams to classical knots.
\end{prop}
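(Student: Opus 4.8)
The plan is to establish two things: that the map from knotoid diagrams to classical knots (via the fixed closure) is well-defined, i.e. it respects the knotoid equivalence relation, and that it is surjective onto the set of classical knots. Throughout I fix the closure type to be, say, the underpass closure; the argument for the overpass closure is identical with over/under roles swapped.

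For well-definedness, I would first make precise what the map does: given a knotoid diagram $K$ in $S^2$, I connect the leg and the head by an arc $\alpha$ in $S^2$ that passes \emph{under} every strand of $K$ it crosses, and then view the resulting closed diagram in $\mathbb{R}^3 \subset S^3$, reading off the isotopy class of the associated knot. The content is that if $K$ and $K'$ are equivalent knotoid diagrams, then their closures are isotopic knots. Since knotoid equivalence is generated by the Reidemeister moves RI, RII, RIII and planar isotopy performed away from the endpoints, it suffices to check that each such move induces an ambient isotopy (or at worst a Reidemeister sequence) between the corresponding closures. Here the key point is that each move takes place in a disk disjoint from both endpoints, so the closure arc $\alpha$ can be chosen to avoid the region where the move happens, or be pushed off it; hence the move lifts verbatim to a Reidemeister move on the closed diagram, which does not change the knot type. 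I would also need to address the ambiguity in the \emph{choice} of the connecting arc $\alpha$ itself: any two underpass arcs joining the same leg and head are homotopic rel endpoints in $S^2$ (since $S^2$ minus the finitely many crossing points is connected and the arc only ever passes under), and sliding $\alpha$ across strands while keeping it underneath corresponds to RII-type moves on the closure, so different admissible arcs yield isotopic knots. The forbidden moves never enter because they are precisely the moves involving an endpoint, which are excluded from the equivalence relation.

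For surjectivity, the strategy is to exhibit, for an arbitrary classical knot $\kappa$, a knotoid diagram whose fixed closure is $\kappa$. Given any knot diagram $D$ of $\kappa$ in $S^2$, I would pick a point in the interior of some edge of $D$, cut the diagram open at that point to create two endpoints (a leg and a head) lying in the same local region, and declare the result a knotoid diagram $K$. Forming the underpass closure of $K$ rejoins the two endpoints by a short arc in that region; by choosing the cut so that reconnecting reproduces the original strand (the reconnecting arc is a trivial short under-arc introducing no new crossing or an easily removable one), the closure of $K$ is isotopic to the original $D$, hence to $\kappa$. This shows every knot arises as a closure.

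The main obstacle I anticipate is the well-definedness half, and specifically the careful handling of the closure arc near moves that occur close to (but not at) the endpoints, together with verifying that the choice of underpass arc does not matter. The Reidemeister-move bookkeeping is routine once one observes each move lives in an endpoint-free disk, but one must argue cleanly that the arc $\alpha$ can always be isotoped away from that disk without changing the knot type, and that the two endpoints being in possibly different regions of the diagram (the defining feature of knotoids) still admits a consistent underpass arc — this is exactly where the ``goes under each arc it meets'' prescription does the work, removing the dependence on which regions the endpoints occupy. Surjectivity is comparatively easy and essentially amounts to running the closure construction in reverse.
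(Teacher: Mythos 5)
Your proposal is correct: fixing the underpass closure, the fact that the connecting arc passes under the entire diagram makes the choice of arc immaterial (sliding it across strands and crossings is realized by RII/RIII moves on the closed diagram), Reidemeister moves performed in a disk away from the endpoints lift verbatim to Reidemeister moves on the closure once the arc is pushed off that disk, and surjectivity follows by cutting open an arbitrary knot diagram. The paper itself gives no proof of this proposition --- it is recalled verbatim from Turaev \cite{T} --- and your argument is essentially the standard one from that source, so the approaches coincide.
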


Finally, it is worth mentioning that in \cite{D1}, the pseudo closure of knotoids is defined, that is, the arc used to connect the endpoints of a knotoid diagram in $S^2$ is missing all crossing information. This leads to the theory of {\it pseudo-knotoids} introduced and studied in \cite{D1}.

\subsection{Braidoids}\label{broid}

In \cite{GL1} braidoid diagrams are defined (similarly to classical braid diagrams), as a system of finite descending strands that involves one or two strands starting with or terminating at an endpoint that is not necessarily at top or bottom lines of the defining region of the diagram. More precisely:

\begin{defn}\label{broiddefn}\rm
A {\it braidoid diagram} $B$ is a system of a finite number of arcs immersed in $[0, 1] \times [0,1] \subset \mathbb{R}^2$, where $\mathbb{R}^2$ is identified with the xt-plane, such that the t-axis is directed downward. The arcs of $B$ are called the strands of $B$. Each strand of $B$ is naturally oriented downward, with no local maxima or minima, following the natural orientation of $[0, 1]$. Moreover, there are only finitely many intersection points among the strands, which are transversal double points endowed with over/under data, and are called crossings of $B$. 
\end{defn}

A braidoid diagram has two types of strands, the classical strands, i.e. braid strands connecting points on $[0, 1]\times \{0\}$ to points on $[0, 1] \times \{1\}$, and the {\it free strands} that either connect a point in $[0,1]\times \{0\}$ or in $[0,1]\times \{1\}$ to an {\it endpoint} located anywhere in $[0, 1]\times [0, 1]$, or they connect two endpoints that are located anywhere in $[0, 1] \times [0, 1]$. These points that don't necessarily lie on $[0, 1] \times \{0\}$ or $[0, 1] \times \{1\}$, are called {\it braidoid ends}. For an illustration see Figure~\ref{broi}. For more details and examples the reader is referred to \cite{GL1}. 

\begin{figure}[ht]
\begin{center}
\includegraphics[width=0.8in]{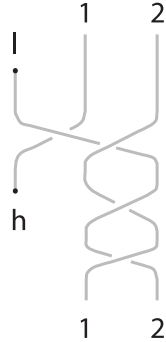}
\end{center}
\caption{A braidoid diagram.}
\label{broi}
\end{figure}

We now present braidoid isotopy:

\begin{defn}\label{broidiso}\rm
Two braidoid diagrams are said to be isotopic if one can be obtained from the other by a finite sequence of the following moves, that we call  {\it braidoid isotopy} moves:
\smallbreak
\begin{itemize}
\item[$\bullet$] {\it Braidoid} $\Delta${\it -moves} illustrated in the left part of Figure~\ref{biso1}: a $\Delta$-move replaces a segment of a strand with two segments in a triangular disk free of endpoints, passing only over or under the arcs intersecting the triangular region of the move whilst the downward orientation of the strands is preserved.
\smallbreak
\item[$\bullet$] {\it Vertical moves} as illustrated in the right part of Figure~\ref{biso1}: the endpoints of a braidoid diagram can be pulled up or down in the vertical direction but without letting an endpoint of a braidoid diagram to be pushed/pulled over or under a strand (recall the forbidden moves).
\smallbreak

\begin{figure}[ht]
\begin{center}
\includegraphics[width=3.2in]{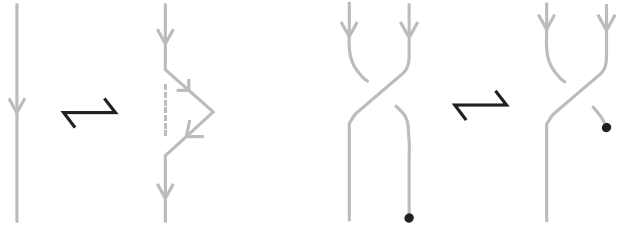}
\end{center}
\caption{A $\Delta$-move and a vertical move on a braidoid.}
\label{biso1}
\end{figure}

\item[$\bullet$] {\it Swing moves} as illustrated in Figure~\ref{biso2}: the endpoints are allowed to swing to the right or the left like a
pendulum as long as the downward orientation on the moving arc is preserved, and the forbidden moves are not violated.
\end{itemize}
\smallbreak
\begin{figure}[ht]
\begin{center}
\includegraphics[width=3.5in]{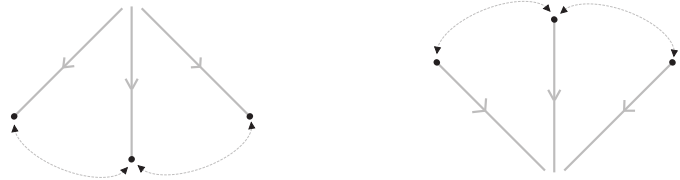}
\end{center}
\caption{The swing moves on braidoids.}
\label{biso2}
\end{figure}
\smallbreak
An isotopy class of braidoid diagrams is called a {\it braidoid}. Moreover, a {\it labeled braidoid diagram} is a braidoid diagram with a label over or under assigned to each pair of corresponding ends.
\end{defn}

We define a closure operation on labeled braidoids, which is similar to that of mixed braids in handlebodies \cite{HL}.

\begin{defn}\rm
A {\it labeled braidoid diagram} is a braidoid diagram whose corresponding ends are labeled either with ``o'' or ``u'' in pairs. The {\it closure} of a labeled braidoid is realized by joining each pair of corresponding ends by a vertical segment, either over or under the rest of the braidoid and according to the label attached to these braidoid ends (see Figure~\ref{clab}).
\end{defn}

\begin{figure}[ht]
\begin{center}
\includegraphics[width=5.5in]{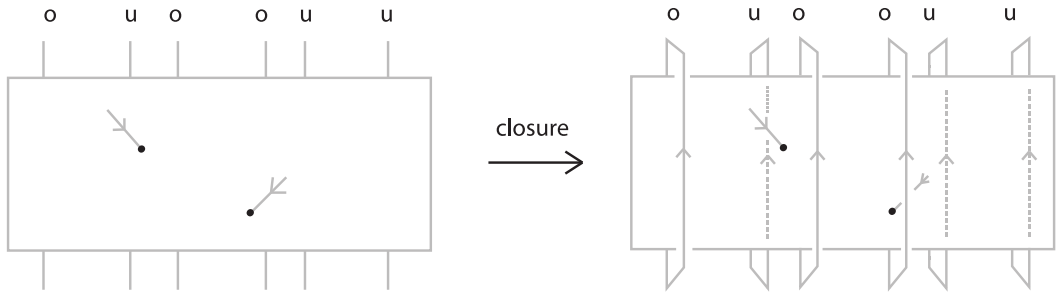}
\end{center}
\caption{The closure of a labeled braidoid.}
\label{clab}
\end{figure}

\begin{remark}\rm
It is crucial to note that different labels on the endpoints of a braidoid may yield non-equivalent closures. For more details the reader is referred to \cite{GL1, HL}. Moreover, it is worth mentioning that in \cite{GL1} the authors prove that any knotoid diagram may be isotoped to be the closure of some labeled braidoid diagram whose labels are all ``u'' (\cite{GL1} Corollary 1), and they define a {\it uniform braidoid} to be a labeled braidoid with all labels ``u''.
\end{remark}

In \cite{GL1, GL2}, the authors present a braidoiding algorithm for knotoids, with the use of which, they obtain the following result: 

\begin{thm}[{\bf The analogue of the Alexander theorem for knotoids}] \label{alexkn}
Any (multi)-knotoid diagram is isotopic to the closure of a (labeled) braidoid diagram.
\end{thm}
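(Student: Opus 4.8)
The plan is to realize the statement by an explicit \emph{braidoiding algorithm} of Yamada--Vogel type, adapted to the knotoid setting, which is exactly the procedure of \cite{GL1, GL2}. First I would fix a (multi-)knotoid diagram $K$ in general position, identify the plane with the $xt$-plane so that the $t$-axis points downward, and orient $K$ by its natural orientation from leg to head (together with an arbitrary orientation on each closed component in the multi-knotoid case). A braidoid diagram is characterised by the fact that every strand descends monotonically in the $t$-direction (Definition~\ref{broiddefn}); hence the only obstruction to being a braidoid is the presence of \emph{up-arcs}, i.e. sub-arcs of $K$ along which the orientation runs upward, against the descending direction. The goal of the algorithm is to eliminate all up-arcs while recording, by means of labels, how the eliminated pieces must be reglued so that the closure operation recovers $K$.

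The core step is the \emph{braidoiding move}. After subdividing $K$ at finitely many points so that the up-arcs are pairwise disjoint and each can be isolated from the remaining strands, I would replace each up-arc by cutting it at an interior point. The two resulting free ends are declared a pair of corresponding braidoid ends, and they are pulled apart and rerouted downward so that the former up-arc becomes a pair of descending strands; to each such pair I attach the label ``o'' or ``u'' according to whether the original up-arc passed over or under the strands it crossed. By the very definition of the closure of a labeled braidoid (joining corresponding ends by a vertical segment over or under the rest of the diagram according to the label), reconnecting these pairs restores exactly the original over/under information of each eliminated up-arc, so the closure of the resulting diagram is isotopic to $K$. Iterating over all up-arcs produces a diagram all of whose strands descend, i.e. a labeled braidoid diagram $B$ with closure $K$.

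The main obstacle, and the point where the knotoid theory genuinely departs from the classical Alexander theorem, is the interaction of the braidoiding move with the endpoints of $K$ and with the \emph{forbidden moves} (Figure~\ref{forb}). The leg and head of $K$ must become the braidoid ends of \emph{free strands}, and such a strand must reach its terminal end without that end ever being pushed over or under a transversal strand. Consequently, an up-arc adjacent to an endpoint cannot be braided by the generic move, since the reroute would drag the endpoint across another strand and realise a forbidden move. I would resolve this by first using the vertical and swing isotopy moves of Definition~\ref{broidiso} to relocate each endpoint to an outermost position, so that the adjacent up-arc can be turned into a descending free strand by a pendulum-like swing rather than by a crossing-creating reroute; the genuine braidoiding move is then applied only to up-arcs that are bounded away from the endpoints. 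Verifying that this relocation is always available, that it respects the forbidden moves, and that it still strictly reduces the up-arc count is the delicate part of the argument, and is precisely what the braidoiding algorithm of \cite{GL1, GL2} is designed to guarantee.

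Finally, in the multi-knotoid case the closed components carry no endpoints, so their up-arcs are eliminated by the unmodified braidoiding move exactly as in the classical Yamada--Vogel algorithm, and they interleave with the open part without further difficulty. Assembling the descending strands coming from the open component and from the closed components yields a single labeled braidoid diagram whose closure is isotopic to the given (multi-)knotoid, which is the assertion of Theorem~\ref{alexkn}.
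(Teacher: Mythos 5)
Your proposal is correct and follows essentially the same route as the paper, which recalls the braidoiding algorithm of \cite{GL1, GL2}: cancel horizontal/vertical arcs by small perturbations, subdivide the up-arcs so each meets crossings of one type only, eliminate each up-arc by a braidoiding (cut-and-reroute) move labeled ``o'' or ``u'' according to its crossings, and observe that the labeled closure restores exactly the eliminated arcs, so the closure of the resulting labeled braidoid is isotopic to the original (multi-)knotoid. Your additional care about up-arcs adjacent to the endpoints and the forbidden moves is precisely the point the paper delegates to \cite{GL1} (where the free strands are treated ``similarly'' to the classical ones), so the two arguments coincide in substance.
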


It is worth mentioning that the braidoiding algorithm in \cite{GL1} is identical to the braiding algorithm presented in \cite{LR1} for the classical strands of the braidoid, and similar for the free strands. We now recall the braidoiding algorithm of \cite{GL1}, which is crucial for obtaining the analogue of the Alexander theorem for knotoids on T. 

\bigbreak

The main idea of the braidoiding algorithm is to keep the arcs of the oriented mixed link diagrams that go downwards with respect to the height function unaffected, and replace arcs that go upwards with braid strands. These arcs are called {\it up-arcs} (see Figure~\ref{upa}). Note that by small perturbations in a knotoid diagram we may cancel horizontal or vertical arcs and thus, a knotoid diagram may be assumed to consist of a finite number of arcs that are oriented downwards or upwards. When/If we run along an up-arc, we subdivide it into smaller arcs, each containing crossings of one type only as shown in Figure~\ref{upa}. 

\begin{figure}[ht]
\begin{center}
\includegraphics[width=4.4in]{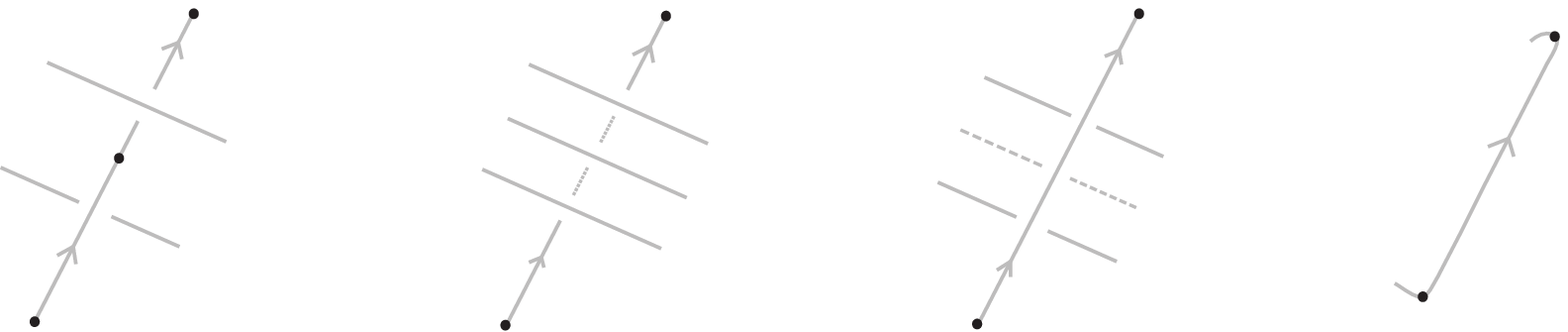}
\end{center}
\caption{Up-arcs.}
\label{upa}
\end{figure}

We now label every up-arc with an ``o''or a ``u'', according to the crossings it contains. If it contains no crossings, then the choice is arbitrary and the arc is called a {\it free up-arc}. We perform an $o$-braiding move on all up-arcs which were labeled with an ``o'' and $u$-braiding moves on all up-arcs which were labeled with a ``u'' (see Figure~\ref{ahg}).
\smallbreak

\begin{figure}[ht]
\begin{center}
\includegraphics[width=2.5in]{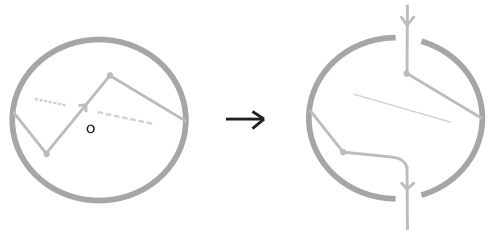}
\end{center}
\caption{Braiding moves for up-arcs.}
\label{ahg}
\end{figure}

The result is a braidoid whose closure is isotopic to the initial knotoid. For more details the reader is referred to \cite{GL1}.

\smallbreak

Moreover, in \cite{GL2}, the authors present a geometric analogue of the Markov's theorem for knotoids, with the use of the $L$-moves. More precisely, an {\it $L$-move} on a labeled braidoid $\beta$, consists in cutting an arc of $\beta$ open and pulling the upper cutpoint downward and the lower upward, so as to create a new pair of braid strands with corresponding endpoints (on the vertical line of the cutpoint), and such that both strands cross the rest of the braid all over in the case of an $L_o$-move or all under in the case of an $L_u$-move. We finally assign to the new pair of corresponding strands the label ``o'' or ``u'' according to the type of $L$-move that was applied. At this point it is crucial to observe the similarity between the $L$-moves and the ``braidoiding'' moves on the braidoiding algorithm. This similarity allow us to formulate the analogue of the Markov theorem for knotoids.

\smallbreak

In \cite{GL2} the authors also define the {\it fake forbidden moves} on a labeled braidoid diagram $B$, as forbidden moves on $B$ which upon closure induce a sequence of fake forbidden moves on the resulting (multi-)knotoid diagram. Moreover, a {\it fake swing move} is defined as a swing move which is not restricted, in the sense that the endpoint surpasses the vertical line of a pair of corresponding ends but in the closure it gives rise to a sequence of swing and fake forbidden moves on the resulting (multi-)knotoid diagram. See Figure~\ref{fm2} for an example of a fake swing move and a fake forbidden move on a labeled braidoid diagram.

\begin{figure}[ht]
\begin{center}
\includegraphics[width=3.7in]{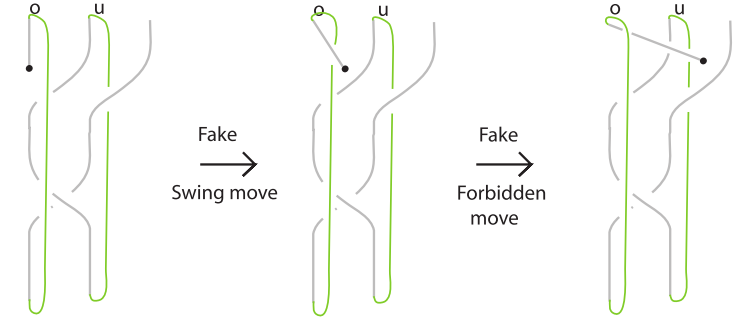}
\end{center}
\caption{A fake swing move and a fake forbidden move.}
\label{fm2}
\end{figure}

As proved in Lemma~8 \cite{GL2}, a fake forbidden move can be generated by a sequence of $L$-moves, together with planar isotopy and fake swing moves. Define now $L${\it -equivalence} on labeled braidoid diagrams to be the equivalence relation on labeled braidoid diagrams generated by the $L$-moves together with labeled braidoid isotopy moves and fake swing moves. $L$-equivalence turns out to be especially useful for formulating a braidoid equivalence since there is no algebraic structure for braidoids. Indeed, we have the following result \cite{GL2}:

\begin{thm}[{\bf An analogue of the Markov theorem for braidoids}]\label{brisomark}
The closures of two labeled braidoid diagrams are isotopic (multi)-knotoids in $\Sigma$ if and only if the labeled braidoid diagrams are related to each other via $L$-equivalence moves.
\end{thm}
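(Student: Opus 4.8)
The proof naturally splits into the two implications of the stated equivalence.

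For the \emph{forward} (easy) direction, suppose the two labeled braidoid diagrams are related by $L$-equivalence; I would verify that each generating move preserves the isotopy class of the closure. Braidoid isotopy moves close to genuine knotoid isotopies directly. For an $L_o$- or $L_u$-move, closing the newly created pair of corresponding strands produces a diagram that differs from the original closure only by a planar isotopy together with a sequence of Reidemeister II (and possibly I) moves, so the closures are isotopic knotoids; this is essentially the observation that the inserted braid strands and their closing arc can be contracted back. Finally, a fake swing move closes to a sequence of swing moves and fake forbidden moves on the resulting (multi-)knotoid, each of which is an admissible knotoid isotopy. Hence $L$-equivalent braidoids have isotopic closures.

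The substance of the theorem is the \emph{converse}. My plan is to run the braidoiding algorithm of Theorem~\ref{alexkn} in tandem with a generic isotopy between the two closures. Concretely, let $\beta_1,\beta_2$ be labeled braidoids whose closures are isotopic knotoids, and fix a finite sequence of elementary moves --- planar isotopy, Reidemeister moves RI, RII, RIII away from the endpoints, together with the endpoint-admissible moves --- realizing the isotopy $\widehat{\beta_1}\simeq\widehat{\beta_2}$. The first step is to establish that the braidoiding algorithm is well defined up to $L$-equivalence: the choices it involves (the subdivision of up-arcs into same-type pieces, the arbitrary labels on free up-arcs, and the small perturbations removing horizontal or vertical arcs) change the output only by $L$-moves and braidoid isotopy. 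The second step is a retraction lemma: braidoiding the closure $\widehat{\beta}$ of a labeled braidoid $\beta$ returns a braidoid $L$-equivalent to $\beta$ itself. Granting these, it remains to track the effect of a single elementary move.

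The heart of the argument is therefore a case analysis showing that performing one elementary move on the knotoid diagram and re-braidoiding changes the resulting braidoid only up to $L$-equivalence. Here the decisive structural fact --- already flagged in the discussion preceding the statement --- is the exact correspondence between the braidoiding moves and the $L$-moves: whenever an elementary move creates, destroys, or relocates an up-arc, the discrepancy between the two braidoidings is precisely an $L_o$- or $L_u$-move, with RI contributing the stabilizing $L$-move familiar from the classical Lambropoulou--Rourke theorem and RII/RIII reducing to local braid identities as in \cite{LR1}. I expect the genuine obstacle to be the moves that interact with the endpoints and the free strands, since these have no analogue in the classical braid setting. In particular, moves that appear to be forbidden on the braidoid level --- the fake forbidden moves --- must be shown to lie within $L$-equivalence; this is supplied by Lemma~8 of \cite{GL2}, which decomposes a fake forbidden move into a sequence of $L$-moves, planar isotopy, and fake swing moves. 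Assembling the per-move verifications along the fixed isotopy sequence then yields an $L$-equivalence between the braidoidings of $\widehat{\beta_1}$ and $\widehat{\beta_2}$, and combining this with the retraction lemma gives $\beta_1\simeq_L\beta_2$, completing the proof.
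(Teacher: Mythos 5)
This statement is one the paper \emph{recalls} from \cite{GL2} rather than proves: it appears in the preliminaries with no proof, only the surrounding remarks that the braidoiding moves mirror the $L$-moves and that fake forbidden moves decompose into $L$-moves, planar isotopy and fake swing moves (Lemma~8 of \cite{GL2}). Your proposal reconstructs exactly the strategy those remarks point to, which is also the architecture of the Lambropoulou--Rourke proof that the paper later leans on when it proves its own Markov-type result for mixed braidoids (Theorem~\ref{brisomarkst}): the easy direction by checking that each generator of $L$-equivalence closes to a knotoid isotopy, and the converse by (i) well-definedness of the braidoiding algorithm up to $L$-equivalence, (ii) a retraction lemma for braidoiding the closure of a braidoid, and (iii) a per-move case analysis keyed to the correspondence between braidoiding moves and $L$-moves, with the endpoint/free-strand interactions handled via Lemma~8 of \cite{GL2}. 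So your outline agrees with the intended proof; the only caveat is that it remains an outline --- steps (i)--(iii) are precisely where the substantive work of \cite{GL2} lies, and you assume them (``granting these'') rather than establish them, which is reasonable for a sketch but should be acknowledged as the locus of the actual difficulty.
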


\subsection{Pseudo knotoids}

In this subsection we recall basic results on pseudo knotoids, introduced and studied in \cite{D1}, as a generalization of pseudo knots (see also \cite{HJMR, D2}.)

\begin{defn}\rm
A {\it pseudo knotoid diagram} in a surface $\Sigma$ is a knotoid diagram in $\Sigma$ where some crossing information may be missing. The undetermined crossings are again called {\it pre-crossings} (for an illustration see Figure~\ref{pcl}).
\end{defn}

\begin{figure}[ht]
\begin{center}
\includegraphics[width=1.8in]{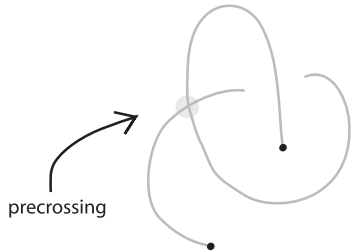}
\end{center}
\caption{A pseudo knotoid.}
\label{pcl}
\end{figure}

A pseudo knotoid in $\Sigma$ is then an equivalence class of pseudo knotoid diagrams in $\Sigma$ up to the equivalence relation induced by moves illustrated in Figure~\ref{reid} and planar isotopy.

\begin{figure}[ht]
\begin{center}
\includegraphics[width=6.2in]{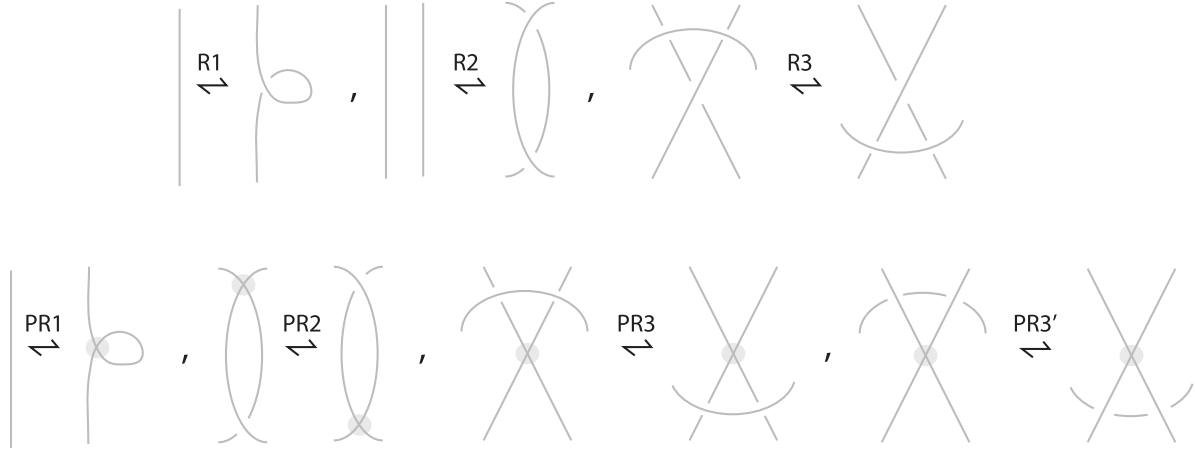}
\end{center}
\caption{Reidemeister moves for pseudo knots.}
\label{reid}
\end{figure}

\begin{remark}\rm
It is worth mentioning that the theory of pseudo knotoids is close related to the theory of singular knotoids, that is, knotoids with finite many singularities. 
\end{remark}

As illustrated in Figure~\ref{pfm}, there is an extra forbidden move in the case of pseudo knotoids, similar to the first forbidden move in the case of knotoids, tha we call {\it pseudo forbidden move}.

\begin{figure}[ht]
\begin{center}
\includegraphics[width=2in]{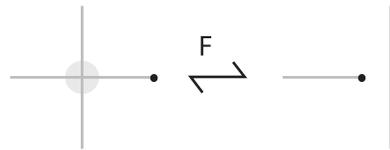}
\end{center}
\caption{The pseudo forbidden move.}
\label{pfm}
\end{figure}

Note that there is a situation where pseudo forbidden moves, seemingly occur as illustated in Figure~\ref{πffm}. We shall call these moves
{\it fake pseudo forbidden moves}.

\begin{figure}[ht]
\begin{center}
\includegraphics[width=2.2in]{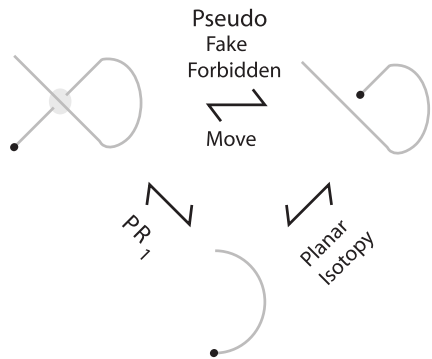}
\end{center}
\caption{The fake pseudo forbidden move.}
\label{πffm}
\end{figure}

\subsection{Pseudo braidoids}

In the same way that {\it braidoids} were defined, we may define {\it pseudo braidoids} as the counterpart theory of pseudo knotoids. Pseudo braidoids are (labeled) braidoids with some crossing information missing. In order to obtain pseudo (labeled) braidoid isotopy, we allow the analogue of the vertical move on pseudo braidoids, that we call {\it pseudo vertical  move}, illustrated in Figure~\ref{biso3}.

\begin{figure}[ht]
\begin{center}
\includegraphics[width=1.35in]{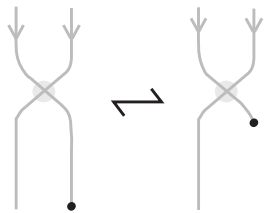}
\end{center}
\caption{The pseudo vertical move.}
\label{biso3}
\end{figure}

Note also that, as illustrated in Figure~\ref{fmb2}, a new type of forbidden move appears whenever an endpoint is to be pushed or be pulled away from a pre-crossing. We call such a move, a {\it pseudo forbidden move}.

\begin{figure}[ht]
\begin{center}
\includegraphics[width=1.8in]{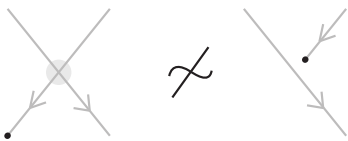}
\end{center}
\caption{The pseudo forbidden move on pseudo braidoids.}
\label{fmb2}
\end{figure}

If we define the closure operation of pseudo braidoids in the same way as in the case of classical braidoids, we obtain the following result:

\begin{thm}[{\bf The analogue of the Alexander theorem for pseudo knotoids}] \label{alexpknoid}
Every pseudo (multi)-knotoid can be obtained by closing a pseudo braidoid.
\end{thm}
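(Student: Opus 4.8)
The plan is to extend the braidoiding algorithm recalled above to pseudo (multi)-knotoid diagrams, treating each pre-crossing as a third admissible type of crossing datum alongside the over- and under-crossings. Starting from a pseudo (multi)-knotoid diagram $K$ in $S^2$, I would first place it in general position with respect to the height function, exactly as in the classical case: by planar isotopy and small perturbations we may cancel all horizontal and vertical arcs, arrange that $K$ consists of finitely many arcs each monotone (oriented upward or downward), and that all crossings and pre-crossings occur at distinct heights and away from the endpoints. Throughout this step one respects the forbidden and pseudo forbidden moves near the endpoints (Figures~\ref{forb} and \ref{pfm}), so that no strand adjacent to an endpoint is pulled across a transversal arc or a pre-crossing.

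Next I would isolate the up-arcs and subdivide each of them so that every resulting sub-arc is involved only in crossings of a single type: all over-crossings, all under-crossings, or all pre-crossings; a sub-arc carrying no crossing is a free up-arc and is labeled arbitrarily. On the sub-arcs labeled ``o'' and ``u'' I would perform the usual $o$- and $u$-braiding moves of \cite{GL1} (Figure~\ref{ahg}), which leave the genuine crossings untouched. The genuinely new ingredient is a \emph{pseudo-braiding move}, applied to each sub-arc all of whose crossings are pre-crossings: it replaces such an up-arc by a pair of descending braid strands that meet the remaining strands of the diagram in pre-crossings. Because a pseudo braidoid admits pre-crossings by definition, these new strands are legitimate, and the move is justified as a pseudo-isotopy by the pseudo-Reidemeister moves of Figure~\ref{reid}, so that the pre-crossing data is carried along unchanged rather than being resolved into an over- or under-crossing.

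After all up-arcs have been eliminated, what remains is a system of downward-oriented strands with the prescribed crossings and pre-crossings, that is, a pseudo (labeled) braidoid $B$. Closing $B$ by the pseudo-braidoid closure (joining corresponding ends according to their labels, as in the classical case) recovers a pseudo (multi)-knotoid that is isotopic to $K$: on the ``o'' and ``u'' strands this is the content of Theorem~\ref{alexkn}, while on the pseudo-braided strands it follows because the closure arcs re-create precisely the pre-crossings that the pseudo-braiding move recorded. This yields the desired pseudo braidoid whose closure is $K$.

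The main obstacle I anticipate is the precise justification of the pseudo-braiding move: one must check that pulling an up-arc whose intersections are pre-crossings can be performed so that each such pre-crossing genuinely survives as a pre-crossing of $B$ --- that the braiding isotopy never forces a choice of over/under at a pre-crossing --- and that this can be arranged compatibly with the forbidden and pseudo forbidden moves for the free strands adjacent to the endpoints. Once the pseudo-braiding move is shown to be well-defined and to commute with the closure operation in the pseudo sense, the remainder of the argument is a direct transcription of the braidoiding algorithm of \cite{GL1, GL2}, the only difference being the bookkeeping of the third crossing type.
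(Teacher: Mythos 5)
Your general-position and subdivision steps are fine, and your instinct that the whole difficulty sits in the treatment of up-arcs passing through pre-crossings is exactly right; unfortunately, the \emph{pseudo-braiding move} you invent to handle them is not a legitimate move, and this is a fatal gap rather than a deferred technical check. When you cut an up-arc and pull the two new ends to the top and bottom of the diagram, the new vertical strands acquire crossings with \emph{every} strand they pass, not only with the strands the original up-arc met. In the classical algorithm this is harmless because the new strands go entirely over (or entirely under) everything, and a strand lying over (under) the whole diagram can be slid into and out of that position by a genuine isotopy; that is what makes the $o$- and $u$-braiding moves of Figure~\ref{ahg} valid and what Theorem~\ref{alexkn} rests on. Declaring instead that the new strands meet the rest of the diagram in pre-crossings is not an isotopy in any sense: pre-crossings are diagrammatic data that cannot be created or destroyed against other strands. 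In particular there is no pseudo-Reidemeister~II move cancelling a pair of pre-crossings --- resolving one of them as an over-crossing and the other as an under-crossing produces a clasp, so such a cancellation is incompatible with resolutions, which is precisely why no pseudo analogue of the R2 move appears among the moves of Figure~\ref{reid}. Hence the closure of the pseudo braidoid your algorithm outputs contains pre-crossing ``detours'' that cannot be undone, and it is \emph{not} pseudo-isotopic to the diagram you started from; the closure arcs do not ``re-create'' the recorded pre-crossings, they add spurious ones.

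The paper's route (recalled from \cite{D1}, and spelled out in this paper for the mixed case in the discussion preceding Theorem~\ref{newprpkalex}) avoids the problem in the opposite way: rather than devising a braiding move that passes through pre-crossings, one first isotopes the diagram so that the braiding algorithm never sees a pre-crossing at all. Borrowing the idea used in \cite{KL} for virtual knots, every pre-crossing involving at least one up-arc is rotated so that both of its arcs point downward (Figure~\ref{tw}); after this preparation, all up-arcs are free of pre-crossings, and the standard braidoiding algorithm of \cite{GL1}, whose new strands cross the diagram by honest over- or under-crossings, applies verbatim and leaves the pre-crossings untouched among the descending strands. If you replace your pseudo-braiding move by this rotation step, the rest of your argument goes through essentially unchanged.
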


We define pseudo braidoid equivalence as follows:

\begin{defn}\label{pbroidiso}\rm
Two pseudo braidoid diagrams are said to be isotopic if one can be obtained from the other by a finite sequence of the moves of Definition~\ref{broidiso} together with the pseudo vertical move. An isotopy class of pseudo braidoid diagrams is called a {\it pseudo braidoid} and a {\it labeled pseudo braidoid diagram} is a pseudo braidoid diagram with a label over or under assigned to each pair of corresponding ends.
\end{defn}

Let now {\it pseudo $L$-moves} on labeled pseudo braidoids, be $L$-moves on labeled braidoids, that is, the two strands that appear after the performance of a pseudo $L$-move will cross the rest of the braid only with real crossings and in particular, all over the rest of the braidoid in the case of an $L_o$-move or all under the rest of the braidoid in the case of an $L_u$-move. Moreover, define a {\it fake swing move} to be a swing move which is not restricted, in the sense that the endpoint surpasses the vertical line of a pair of corresponding ends, but in the closure it gives rise to a sequence of swing and fake pseudo forbidden moves on the resulting pseudo (multi-)knotoid diagram. Then, in \cite{D1}, the pseudo $L$-equivalence on pseudo braidoid diagrams is defined as follows:

\begin{defn}\rm
The pseudo $L$-moves together with labeled pseudo braidoid isotopy moves and fake swing moves, generate an equivalence relation on labeled pseudo braidoid diagrams that is called {\it pseudo} $L${\it -equivalence}.
\end{defn}

Finally, using the pseudo $L$-equivalence, the analogue of the Markov theorem for pseudo braidoids is obtained \cite{D1}. More precisely, we have the following result:

\begin{thm}[{\bf An analogue of the Markov theorem for pseudo braidoids}]\label{breqpbroid}
The closures of two labeled pseudo braidoid diagrams are isotopic pseudo (multi)-knotoids in $\Sigma$ if and only if the labeled pseudo braidoid diagrams are related to each other via pseudo $L$-equivalence moves.
\end{thm}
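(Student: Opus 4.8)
The plan is to mirror the proof of Theorem~\ref{brisomark}, the braidoid Markov theorem of \cite{GL2}, adapting every step so that the pre-crossings and the pseudo-Reidemeister moves of Figure~\ref{reid} are respected throughout. There are two implications to establish. The forward direction — that pseudo $L$-equivalent labeled pseudo braidoid diagrams have isotopic closures — is the routine one: it suffices to verify that each generator of pseudo $L$-equivalence closes to a pseudo-knotoid isotopy. A labeled pseudo braidoid isotopy move closes to planar isotopy together with the Reidemeister and pseudo-Reidemeister moves performed away from the endpoints; a fake swing move closes, by its very definition, to a sequence of swing and fake pseudo forbidden moves on the resulting pseudo (multi-)knotoid; and a pseudo $L_o$- or $L_u$-move closes to a real Reidemeister~II move followed by planar isotopy, since the two new strands run entirely over, respectively under, the rest of the diagram and carry only real crossings. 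Hence every generator preserves the closure up to pseudo-knotoid equivalence.

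For the converse — the substantive direction — I would argue as follows. First, invoke Theorem~\ref{alexpknoid} and the braidoiding algorithm, so that both closures are realized by applying the algorithm to pseudo-knotoid diagrams. Given an ambient isotopy of pseudo-knotoids carrying one closure to the other, discretize it into a finite sequence of elementary steps: planar isotopy, the classical Reidemeister moves, the pseudo-Reidemeister moves involving pre-crossings (Figure~\ref{reid}), and the allowed swing-type moves, each localized in a small disk and performed away from the endpoints. The proof then reduces to a single key lemma: if two pseudo-knotoid diagrams differ by one such elementary step, the labeled pseudo braidoids produced from them by the braidoiding algorithm are pseudo $L$-equivalent.

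To prove this lemma I would treat the elementary steps case by case, exactly as in \cite{GL2}, exploiting the similarity between the braidoiding moves and the $L$-moves emphasized in the preliminaries. Two sources of discrepancy must be absorbed into pseudo $L$-equivalence. The first is the inherent ambiguity of the braidoiding algorithm itself — the subdivision of an up-arc into pieces carrying crossings of a single type, the arbitrary labeling of free up-arcs, and the choice of which arcs to braid — and one checks, as for ordinary braidoids, that any two such choices are related by pseudo $L$-moves and labeled isotopy. The second is the local modification caused by the elementary step: each (pseudo-)Reidemeister picture is resolved into its up-arcs before and after the move, and the change is realized by introducing or deleting a pair of braid strands, i.e.\ a pseudo $L$-move, possibly composed with a fake swing move. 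I would also establish the pseudo analogue of Lemma~8 of \cite{GL2}, namely that a fake pseudo forbidden move on a labeled pseudo braidoid is generated by pseudo $L$-moves together with planar isotopy and fake swing moves, so that the fake pseudo forbidden moves appearing on closure are covered.

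The main obstacle I expect is the bookkeeping around the pre-crossings. Because pseudo $L$-moves are required to introduce only real crossings (all over in an $L_o$-move, all under in an $L_u$-move), one must verify that none of the elementary steps forces a pre-crossing to be created or destroyed by the braidoiding process, and that the pseudo forbidden move of Figure~\ref{fmb2} — forbidding an endpoint from being pulled past a pre-crossing — is never needed. Concretely, the delicate cases are the pseudo-Reidemeister moves that slide a strand past a pre-crossing, and the instances where an up-arc carries a pre-crossing: there one must check that the braiding can always be arranged with the pre-crossing left undisturbed on a downward arc, so that the discrepancy is again absorbed by real-crossing pseudo $L$-moves and fake swing moves. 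Once this is verified for every local type, composing the per-step equivalences along the discretized isotopy yields the desired pseudo $L$-equivalence, completing the converse.
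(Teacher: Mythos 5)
Your proposal is sound, but note that this paper never proves Theorem~\ref{breqpbroid}: it is recalled in the preliminaries as a result of \cite{D1}, stated with a citation and no argument, so there is no in-paper proof to diverge from. Your reconstruction follows exactly the methodology of the cited literature and of the paper's own proof of the mixed-braidoid analogue (Theorem~\ref{brisomarkst}) — the forward direction by checking that each generator of pseudo $L$-equivalence closes to a pseudo-knotoid isotopy, and the converse by discretizing the isotopy into elementary (pseudo-)Reidemeister steps and proving a one-step lemma through the braidoiding algorithm, absorbing both the algorithm's choices and the local modifications into pseudo $L$-moves and fake swing moves. You also correctly isolate the only genuinely pseudo-specific difficulty, pre-crossings lying on up-arcs, and the resolution you sketch (rotating such pre-crossings so that both arcs point downward, following \cite{KL}, so the braidoiding leaves them untouched) is precisely the device this paper itself deploys later for mixed pseudo braidoids (Figure~\ref{tw}); making that rotation argument explicit is all that separates your outline from a complete proof.
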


\section{Torus Knotoids}\label{knoidST}

In this section we introduce and study the theory of {\it mixed knotoids} in $S^2$, that generalize the theory of mixed links in $S^3$ of \cite{LR1}. We present the analogue of the Reidemeister theorem for mixed knotoids and we extend the Kauffman bracket polynomial for mixed knotoids using the skein relations and a state sum formula.

\subsection{Mixed knotoids and Isotopy}

Consider $S^3$ to be the union of two solid tori. We may consider ST to be the complement of the other Solid Torus, $\widehat{I}$, in $S^3$. Then, we may represent a knotoid $K$ on T by an oriented multi-knotoid $\widehat{I}\cup K$ in $S^{2}$, that consists of the unknotted {\it fixed part} $\widehat{I}$, that represents the complementary solid torus in $S^3$, and the {\it moving part} $K$ that links with $\widehat{I}$ and that represents the knotoid on T (see Figure~\ref{mknoid}). We will call such multi-knotoids as \textit{mixed knotoids}, similar to classical mixed links in $S^3$ that represents links in ST and in any c.c.o. 3-manifold (for more details the reader is referred to \cite{LR1, La1, DL1}). A \textit{mixed knotoid diagram} then is a diagram $\widehat{I}\cup \widetilde{K}$ of $\widehat{I}\cup K$ on the plane of $\widehat{I}$, where this plane is equipped with the top-to-bottom direction of $I$.

\begin{figure}[!ht]
\begin{center}
\includegraphics[width=2.4in]{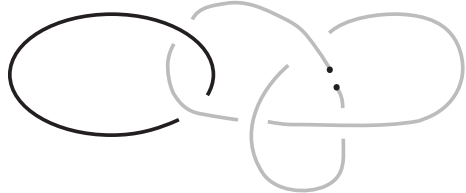}
\end{center}
\caption{A mixed knotoid.}
\label{mknoid}
\end{figure}

Let now $K_1, K_2$ be two knotoids on T. It follows that $K_1$ is isotopic to $K_2$ if and only if the mixed knotoids $\widehat{I}\cup \widetilde{K_1}$ and $\widehat{I}\cup \widetilde{K_2}$ are isotopic in $S^2$ by equivalence moves that keep $\widehat{I}$ pointwise fixed. More precisely, in terms of diagrams we have the following result for isotopy of mixed knotoids:

\begin{thm}[{\bf The analogue of the Reidemeister theorem for mixed knotoids}]\label{RthmST}
Two mixed knotoids diagrams are isotopic if and only if they differ by a finite sequence of the following moves:
\smallbreak
\begin{itemize}
\item[i.] The classical Reidemeister moves that only involve the moving part of the mixed knotoid and that take place away from the endpoints of the mixed knotoid (recall Figure~\ref{reid1}).
\smallbreak
\item[ii.] The generalized Reidemeister moves $GR_2$ and $GR_3$, that involve both the fixed and the moving part of the mixed knotoid, and which are illustrated in Figure~\ref{mreidm}.
\smallbreak
\item[iii.] The {\it generalized (or mixed) fake forbidden moves} illustrated in Figure~\ref{almo}.
\end{itemize}
\end{thm}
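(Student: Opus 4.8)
The plan is to treat the statement as a Reidemeister-type theorem for the ambient isotopy of $S^2$ that keeps the fixed part $\widehat{I}$ pointwise fixed, by combining the Reidemeister theorem for knotoids recalled in \S\ref{knd} with the mixed-link technology of \cite{LR1}. As observed in the paragraph preceding the statement, two mixed knotoids are isotopic exactly when their diagrams are related by such an isotopy while respecting the knotoid constraint at the leg and the head; so the content is to show that every fixed-$\widehat{I}$ isotopy is generated on diagrams by the three listed families, and conversely that each family is realised by one.

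The \emph{if} direction is the routine half. I would verify that each listed move is a genuine isotopy of the mixed knotoid that fixes $\widehat{I}$ pointwise and respects the endpoints. The classical moves (i) are the ordinary $RI$, $RII$, $RIII$ of Figure~\ref{reid1} supported in a disc disjoint both from $\widehat{I}$ and from the endpoints; the generalized moves $GR_2,GR_3$ of (ii) are precisely the moves by which a moving strand passes across the fixed arc, or across a triangle one of whose sides is the fixed arc, and are ambient isotopies of $S^2$ fixing $\widehat{I}$ exactly as for mixed links in \cite{LR1}; and the mixed fake forbidden moves (iii) are honest isotopies for the same reason the fake forbidden moves of Figure~\ref{ffm} are, namely that the apparent forbidden configuration is resolved on the far side of $\widehat{I}$.

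For the \emph{only if} direction I would follow the standard Reidemeister-theorem template, adapted to the mixed setting. Given an ambient isotopy of $S^2$ carrying one diagram to the other and fixing $\widehat{I}$, I would put it in general position with respect to the projection and subdivide it into elementary steps, each altering the diagram by a single Reidemeister-type event. Each such event either avoids $\widehat{I}$, in which case it is one of the classical moves (i), or it involves the fixed part; since $\widehat{I}$ is held fixed, the latter can only be a type-$II$ passage of a moving strand across the fixed arc (a $GR_2$) or a type-$III$ triangle move one of whose strands is the fixed arc (a $GR_3$), giving (ii). Finally, a step occurring next to an endpoint is a priori a forbidden configuration; the only instances that can legitimately arise from an actual isotopy are the fake ones, and their mixed counterparts across $\widehat{I}$ are exactly the moves (iii).

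The step I expect to be the main obstacle is the elementary-move analysis at the fixed part: one must check that $GR_2$ and $GR_3$ genuinely exhaust the ways the moving part can interact with $\widehat{I}$ under a fixed-$\widehat{I}$ isotopy, and that the endpoint bookkeeping near $\widehat{I}$ produces nothing beyond the mixed fake forbidden moves. This is where the non-trivial topology of the torus---encoded by the linking of the moving part with $\widehat{I}$---enters, and I would handle it by transporting the mixed-link isotopy argument of \cite{LR1} to the open-ended, endpoint-constrained knotoid setting, with the forbidden- and fake-forbidden-move discipline of \S\ref{knd} governing the behaviour at the head and the leg.
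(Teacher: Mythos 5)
The first thing to note is that the paper itself contains no proof of Theorem~\ref{RthmST}: the statement appears immediately after the (equally unproved) assertion that isotopy of knotoids on T amounts to equivalence of mixed knotoid diagrams in $S^2$ keeping $\widehat{I}$ pointwise fixed, and its justification is left implicit, by analogy with the mixed-link theory of \cite{LR1}. So your attempt has to be judged on its own merits. Its combinatorial core is the right one, and it is exactly what the paper implicitly appeals to: decompose a fixed-$\widehat{I}$ equivalence into elementary events, and observe that events away from $\widehat{I}$ and away from the endpoints are the classical moves (i); that events at $\widehat{I}$ can only be tangencies ($GR_2$) or triple points containing exactly one arc of $\widehat{I}$ ($GR_3$), because $\widehat{I}$ is embedded and pointwise fixed; and that the only admissible events at the endpoints are passages across $\widehat{I}$, i.e.\ the moves (iii).

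The genuine gap is in the foundation of your ``only if'' direction. You start from ``an ambient isotopy of $S^2$ carrying one diagram to the other and fixing $\widehat{I}$''. Taken literally, an isotopy of the two-sphere is a planar/spherical isotopy of the diagram: it can never create or remove a crossing, there is no projection with respect to which it could be put in general position, and the decomposition argument never starts. Knotoid equivalence is defined diagrammatically, so before any general-position argument one must first build the three-dimensional model to which the left-hand side of the theorem refers: $K$ is an open arc in the slab $S^2\times I$ whose endpoints are attached to two fibers that no strand of $K$ may intersect (the line-isotopy interpretation underlying the Reidemeister theorem for knotoids), while the fixed unknot $\widehat{I}$ has its two arcs crossing the projection region running through the caps above and below the slab; one must then identify isotopy of knotoids on T with isotopy in this model rel $\widehat{I}$ and rel the fiber constraint. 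This is where \cite{LR1} and \cite{T} genuinely enter, and it also corrects a second inaccuracy in your sketch: the moves (iii) are not legitimate ``for the same reason'' as the fake forbidden moves of Figure~\ref{ffm} --- those are derivable from Reidemeister moves in $S^2$, whereas the generalized fake forbidden moves are independent generators, since they change the parity of the number of crossings between $K$ and $\widehat{I}$, a parity which the moves (i) and (ii) preserve. Their legitimacy comes precisely from the fact that the arcs of $\widehat{I}$ lie outside the slab containing $K$ and its endpoint fibers, so an endpoint can be swept across them without any fiber being pierced.
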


\begin{figure}[!ht]
\begin{center}
\includegraphics[width=3.5in]{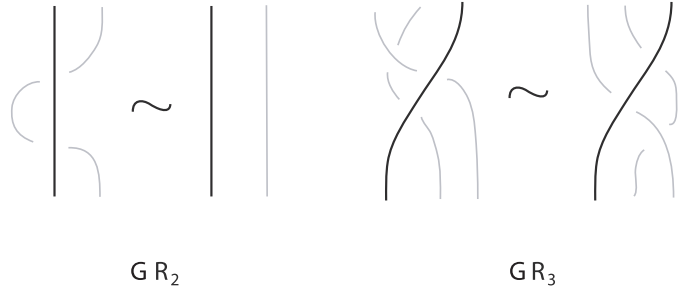}
\end{center}
\caption{The generalized Reidemeister moves.}
\label{mreidm}
\end{figure}

\begin{figure}[H]
\begin{center}
\includegraphics[width=3.4in]{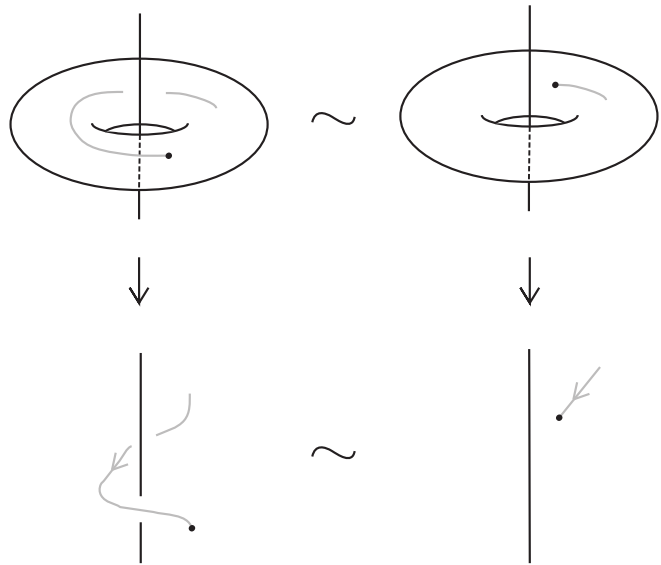}
\end{center}
\caption{The generalized fake forbidden moves.}
\label{almo}
\end{figure}

\subsection{The Kauffman bracket polynomial for mixed knotoids}\label{kbp}

In this subsection we recall the definition of the Kauffman bracket polynomial for knotoids \cite{T} and we extend the Kauffman bracket polynomial for mixed knotoids.

\bigbreak

In \cite{T} the Kauffman bracket polynomial, $<;>$, is defined for knotoids, extending the Kauffman bracket polynomial for classical knots presented in \cite{LK}. More precisely, we have the following:

\begin{defn}\label{pkaufb}\rm
Let $L$ be a knotoid. The {\it Kauffman bracket polynomial} of $L$ is defined by means of the following relations: 
\begin{figure}[H]
\begin{center}
\includegraphics[width=3in]{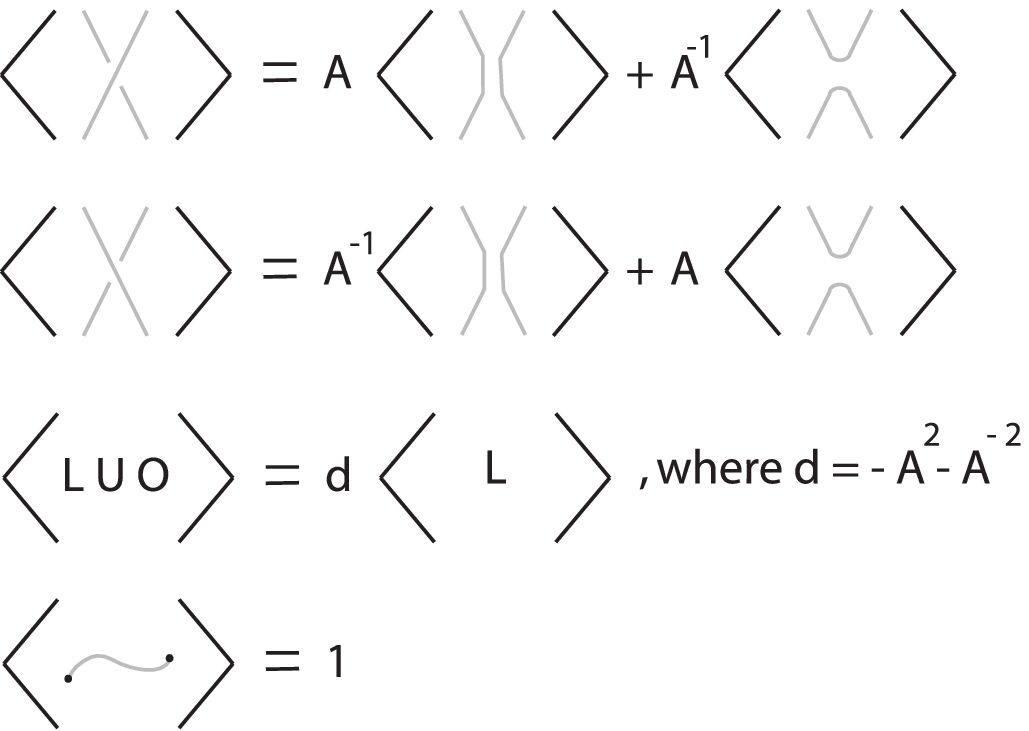}
\end{center}
\label{pkb}
\end{figure}
\end{defn}

As in the case of the Kauffman bracket polynomial for standard links in $S^3$, we may normalize the Kauffman bracket for knotoids in $S^2$ by considering the product of $<L>$ by the factor $\left( -A^3\right) ^{-wr(L)}$, where $wr(L)$ is the writhe of the knotoid $L$, defined as the number of positive crossings minus the number of negative crossings of $L$ (see Figure~\ref{si}).

\begin{figure}[H]
\begin{center}
\includegraphics[width=1.6in]{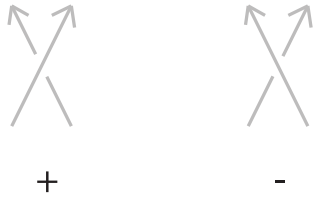}
\end{center}
\caption{The sign of the crossings.}
\label{si}
\end{figure}

As explained in \cite{T}, the normalized Kauffman bracket polynomial for knotoids in $S^2$ generalizes the Jones polynomial of classical knots in $S^3$ under the substitution $A=t^{-1/4}$.

\bigbreak

We now extend the definition of the Kauffman bracket polynomial for mixed knotoids in $S^2$. For this, it is more convenient to view T as a punctured torus (for an illustration of a knot in this set up see Figure~\ref{ptor1}. Note that in this setting, two knotoids $K_1$ and $K_2$ are isotopic if and only if their diagrams differ by a finite sequence of the standard three Reidemeister moves in the punctured plane, that take place away from the endpoints of the knotoids. Moreover, a knot that goes once around the dot and has no crossings will be called a {\it mixed-unknot} (see Figure~\ref{ptor1}). Note also that we will still refer to knotoids in this setting as mixed knotoids.

\begin{figure}[ht]
\begin{center}
\includegraphics[width=2.6in]{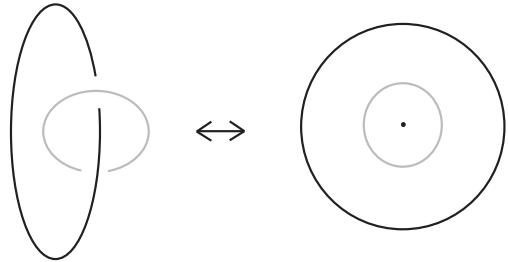}
\end{center}
\caption{T as punctured torus and a mixed-unknot.}
\label{ptor1}
\end{figure}

We are now in position to define the Kauffman bracket polynomial for mixed knotoids.

\begin{defn}\label{pkaufbst}\rm
Let $L$ be a mixed knotoid. The {\it Kauffman bracket polynomial} of $L$ is defined by means of the relations in Definition~\ref{pkaufb} together with the following relations:
\begin{figure}[ht]
\begin{center}
\includegraphics[width=1.6in]{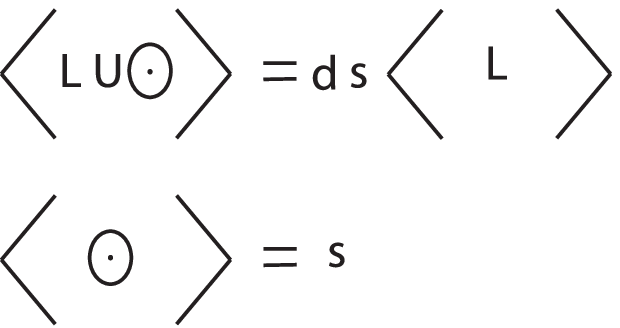}
\end{center}
\label{pkbst}
\end{figure}
\end{defn}

Simple computations show that $<L>$, which is a Laurent polynomial in $\mathbb{Z}\left[A^{\pm 1}, s\right]$, is invariant under all isotopy moves except from RI. Similarly to the case of knotoids in $S^2$ we may normalize it and obtain an invariant for mixed knotoids in $S^2$. Indeed, we have the following:

\begin{thm}
Let $L$ be a mixed knotoid diagram. The polynomial
\[
P_L(A, s)\ =\ (-A^3)^{w(L)}\, <L>,
\]
\noindent where $w(L)$ is the writhe of the knotoid and $<L>$ the Kauffman bracket polynomial of $L$, is an invariant of mixed knotoids.
\end{thm}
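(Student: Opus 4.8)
The plan is to exploit the invariance of the unnormalized bracket $<L>$, which the discussion preceding the theorem already establishes under every isotopy move of Theorem~\ref{RthmST} except the Reidemeister move RI, and to show that multiplication by the writhe factor $(-A^3)^{w(L)}$ exactly compensates for the failure of invariance of $<L>$ under RI. Thus the proof splits into two parts: the moves under which $<L>$ is already invariant, and the single move RI.

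First I would record that the writhe $w(L)$, defined as the number of positive minus the number of negative crossings, is itself unchanged under every move of Theorem~\ref{RthmST} other than RI. Indeed, RII simultaneously creates or deletes one positive and one negative crossing, RIII permutes crossings without altering their signs, the generalized moves $GR_2$ and $GR_3$ behave in the same manner, and the generalized fake forbidden moves of Figure~\ref{almo} merely rearrange the strand past the fixed part $\widehat{I}$ without creating or destroying crossings. Consequently, on each of these moves both $<L>$ and $w(L)$ are invariant, and hence so is the product $P_L(A,s)=(-A^3)^{w(L)}<L>$.

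It therefore remains only to treat RI, and here I would carry out the standard local computation on the two elementary curls using the skein relations of Definition~\ref{pkaufb}. Resolving an RI curl multiplies $<L>$ by $-A^{\pm3}$ while changing $w(L)$ by $\mp 1$, with the two signs correlated precisely so that the induced change in $(-A^3)^{w(L)}$ cancels the change in $<L>$; this is identical to the classical cancellation that renders the normalized bracket an invariant for knots in $S^3$ and for knotoids in $S^2$. Since RI is a purely local move occurring in the moving part, away from both the endpoints and the puncture, the additional torus relations of Definition~\ref{pkaufbst} (the relations introducing the variable $s$) play no role in this computation, and the classical argument applies verbatim. Combining the two parts shows that $P_L(A,s)$ depends only on the mixed-knotoid equivalence class.

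The step I expect to require the most care is the verification that $w(L)$ is genuinely well defined and preserved under the generalized fake forbidden moves of Figure~\ref{almo}, since these involve the interaction of the moving strand with the fixed component $\widehat{I}$; one must check that no crossing changes its sign and that the move is consistent with the orientation from leg to head. Once this is confirmed, together with the routine RI computation, the assembly of the two cases completes the proof that $P_L(A,s)$ is an invariant of mixed knotoids.
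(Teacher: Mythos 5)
Your proposal is correct and follows essentially the same route as the paper, which offers no separate proof beyond the preceding remark that $<L>$ is invariant under all moves of Theorem~\ref{RthmST} except RI, and that normalizing by the writhe factor repairs RI exactly as in the classical case and for knotoids in $S^2$. One caveat on signs: under the standard convention a positive curl multiplies $<L>$ by $-A^{3}$ \emph{and} increases $w(L)$ by $1$, so the compensating factor must be $(-A^{3})^{-w(L)}$ (as the paper itself writes earlier for spherical knotoids); your stated correlation of $-A^{\pm 3}$ with $\mp 1$ is reversed relative to this convention, and it ``works'' only because it matches the exponent-sign typo in the theorem as printed.
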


An equivalent definition of $<L>$ is obtained by using the {\it states} of a mixed knotoid, similar to the states of knots. More precisely, let $L$ be a mixed knotoid with $n$ crossings. We resolve each crossing by applying the A-smoothing when the crossing is positive and the B-smoothing otherwise, and we obtain a single embedded segment, several embedded unknots and several embedded mixed-unknots. Let now $S(L)$ denote the set of all possible states of $L$, $\sigma_S\in \mathbb{Z}$ denotes the sum of $\pm 1$ of $S$ over all crossings of $L$, that is, the number of A-smoothings minus the number of B-smoothings in $S$, $|S|$ denotes the number of embedded unknots and mixed-unknots and let $|M|$ denote the number of mixed-unknots only. Then, the Kauffman bracket polynomial of $L$ may be defined as:

\begin{equation}\label{sskbst}
<L>\ =\ \underset{s\in S(L)}{\sum}\, A^{\sigma_s}\, \left(-A^2-A^{-2}\right)^{|S|-1}\, t^{|M|}.
\end{equation}

The proof that $<L>$ is an invariant of mixed knotoids, that is, the value of $<L>$ is not changed whenever an isotopy move is performed, follows by simple calculations.

\subsection{A note on skein modules}

Skein modules were independently introduced by Przytycki \cite{P} and Turaev \cite{Tu} as generalizations of knot polynomials in $S^3$ to knot polynomials in arbitrary 3-manifolds. The essence is that skein modules are quotients of free modules over ambient isotopy classes of links in 3-manifolds by properly chosen local (skein) relations. In particular, let $M$ be an oriented $3$-manifold and $\mathcal{L}_{{\rm fr}}$ be the set of isotopy classes of unoriented framed links in $M$. Let $R=\mathbb{Z}[A^{\pm1}]$ be the Laurent polynomials in $A$ and let $R\mathcal{L}_{{\rm fr}}$ be the free $R$-module generated by $\mathcal{L}_{{\rm fr}}$. Let $\mathcal{S}$ be the ideal generated by the skein expressions $L-AL_{\infty}-A^{-1}L_{0}$ and $L \bigsqcup {\rm O} - (-A^2-A^{-1})L$, where $L_{\infty}$ and $L_{0}$ are represented schematically by the illustrations in Figure~\ref{skein}. Note that blackboard framing is assumed. 

\begin{figure}[!ht]
\begin{center}
\includegraphics[width=1.9in]{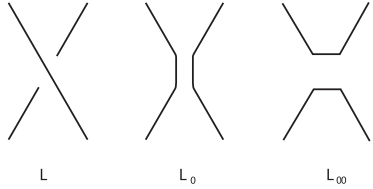}
\end{center}
\caption{The links $L$, $L_0$ and $L_{\infty}$ locally.}
\label{skein}
\end{figure}

Then the {\it Kauffman bracket skein module} of $M$, KBSM$(M)$, is defined to be:

\begin{equation*}
{\rm KBSM} \left(M\right)={\raise0.7ex\hbox{$
R\mathcal{L} $}\!\mathord{\left/ {\vphantom {R\mathcal{L_{{\rm fr}}} {\mathcal{S} }}} \right. \kern-\nulldelimiterspace}\!\lower0.7ex\hbox{$ S  $}}.
\end{equation*}

If we extend the above definition for knotoids and mixed knotoids, and with a little abuse of notation, we may say that the Kauffman bracket skein module of $S^2$ is freely generated by the trivial knotoid and that the Kauffman bracket skein module of T is freely generated by an infinite set of generators $\left\{x^n\right\}_{n=0}^{\infty}$, where $x^n$ denotes a parallel copy of $n$ longitudes of T and $x^0$ is the trivial knotoid (see Figure~\ref{tur}).

\begin{figure}[!ht]
\begin{center}
\includegraphics[width=4.6in]{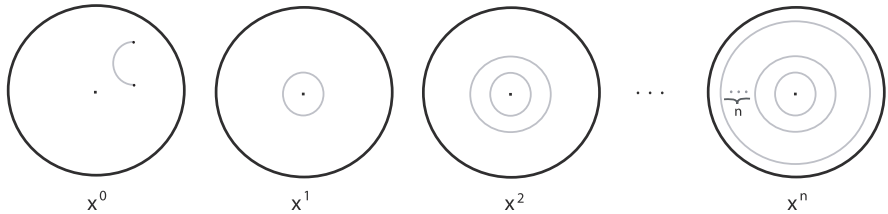}
\end{center}
\caption{The basis of KBSM(T).}
\label{tur}
\end{figure}

It is worth mentioning that in \cite{D4}, a different basis for the classical Kauffman bracket skein module of ST, elements of which are presented in Figure~\ref{diam} in terms of knotoids on T. Note that in this setting, the bold unknot represents the complementary ST in $S^3$.

\begin{figure}[H]
\begin{center}
\includegraphics[width=4.6in]{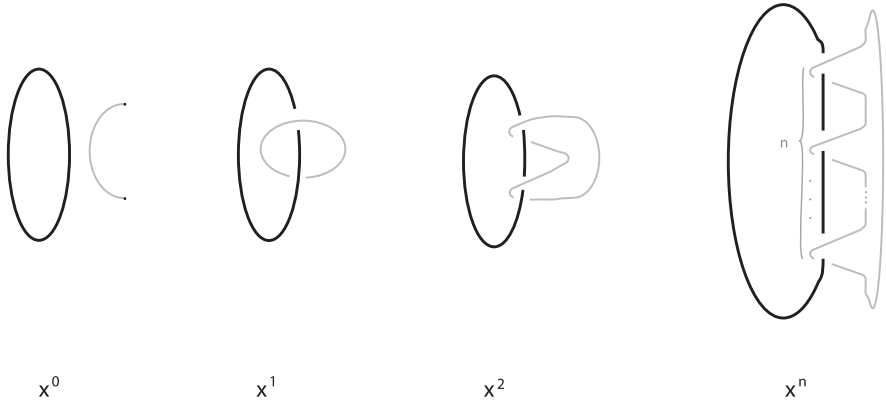}
\end{center}
\caption{A different basis of KBSM(T).}
\label{diam}
\end{figure}

\begin{remark}\rm
The computation of skein modules is a difficult task in general. The braid approach using mixed braids, mixed braid groups and appropriate knot algebras has allowed us to compute the Kauffman bracket skein module of the Solid Torus in \cite{D4} and of the handlebody of genus 2 in \cite{D5}. Moreover, for the case of HOMFLYPT skein modules, which are even more difficult to compute, the braid approach has been successfully applied so far for the case of the Solid Torus in \cite{DL2}, and significant steps toward the computation of the HOMFLYPT skein module of the lens spaces $L(p,1)$ have been done in \cite{DL3, DL4, DLP, D3}.
\end{remark}

\subsection{Mixed pseudo knotoids}\label{mpkd1}

In this subsection we generalize the notion of mixed knotoids to that of mixed pseudo knotoids. More precisely, we define a mixed pseudo knotoid to be a mixed knotoid with some crossing information missing (for an illustration see Figure~\ref{mpknoid}). 

\begin{figure}[!ht]
\begin{center}
\includegraphics[width=1.9in]{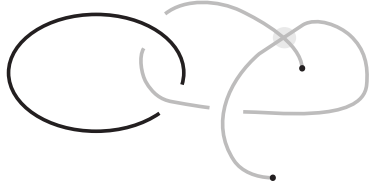}
\end{center}
\caption{A mixed pseudo knotoid.}
\label{mpknoid}
\end{figure}

Pseudo mixed knotoid isotopy is defined in a similar way as in Theorem~\ref{RthmpST}, that is, the equivalence moves involved must keep $\widehat{I}$ pointwise fixed. More precisely, in terms of pseudo diagrams we have the following result for isotopy of mixed pseudo knotoids:

\begin{thm}[{\bf The analogue of the Reidemeister theorem for mixed pseudo knotoids}]\label{RthmpST}
Two mixed pseudo knotoids are isotopic if and only if they iffer by a finite sequence of the following moves:
\smallbreak
\begin{itemize}
\item[i.] The classical Reidemeister moves and the pseudo-Reidemeister moves that only involve the moving part of the mixed pseudo knotoid and that take place away from the endpoints of the mixed pseudo knotoid (recall Figure~\ref{reid}).
\smallbreak
\item[ii.] The generalized Reidemeister moves $GR_2$ and $GR_3$, and the pseudo generalized move $PGR_3$, that involve both the fixed and the moving part of the mixed knotoid. For an illustration of the $PGR_3$ move see Figure~\ref{pgr33}.
\smallbreak
\item[iii.] The {\it generalized (or mixed) fake forbidden moves} (recall Figure~\ref{almo}).
\end{itemize}
\end{thm}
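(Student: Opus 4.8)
The plan is to bootstrap the statement from the two Reidemeister-type theorems already available, namely the analogue of the Reidemeister theorem for mixed knotoids (Theorem~\ref{RthmST}) and the analogue of the Reidemeister theorem for pseudo knotoids in $S^2$ encoded by the moves of Figure~\ref{reid}, and then to account for the single genuinely new phenomenon: the interaction of pre-crossings with the fixed part $\widehat{I}$. Throughout I regard a mixed pseudo knotoid as a pseudo multi-knotoid in $S^2$ on the two components $\widehat{I}$ and $\widetilde{K}$, and I treat each pre-crossing as a node that may be slid along the diagram but whose over/under data is never produced; in particular a node cannot be created or cancelled against a real crossing, nor can two nodes be cancelled as a bigon.

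For the \emph{if} direction I would check move by move that each of the moves in (i)--(iii) is realised by an ambient isotopy of $S^2$ that keeps $\widehat{I}$ pointwise fixed and respects the pre-crossing data. For the classical Reidemeister moves and for $GR_2$, $GR_3$ this is exactly the content of Theorem~\ref{RthmST}; for the pseudo-Reidemeister moves of Figure~\ref{reid} and for $PGR_3$ one observes that sliding a strand past a node, or carrying a node past the fixed arc inside a triangular disk, is a local isotopy that changes neither the number nor the cyclic arrangement of the pre-crossings. The generalised fake forbidden moves of Figure~\ref{almo} are verified as in the non-pseudo case, the essential point being that they are honest isotopies rather than the disallowed (pseudo) forbidden moves of Figures~\ref{forb} and~\ref{pfm} that occur next to an endpoint.

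For the \emph{only if} direction I would take an ambient isotopy carrying $\widehat{I}\cup\widetilde{K_1}$ to $\widehat{I}\cup\widetilde{K_2}$ that fixes $\widehat{I}$ pointwise, put its projection to the plane of $\widehat{I}$ in general position, and record the finitely many instants at which the projected diagram fails to be generic. Away from $\widehat{I}$ and away from the endpoints these instants are precisely the classical and pseudo-Reidemeister events of item (i), by the Reidemeister theorem for pseudo knotoids; the node convention guarantees that no instant introduces or deletes a pre-crossing, so only the sliding-type pseudo events arise. The instants at which the moving part interacts with the fixed arc split, after a small perturbation, into a tangency producing a two-crossing bigon with $\widehat{I}$ (a $GR_2$ event), a triple point involving $\widehat{I}$ all three of whose crossings are real (a $GR_3$ event), and a triple point involving $\widehat{I}$ in which the crossing of the moving part carried past the fixed arc is a pre-crossing (a $PGR_3$ event); here the Lambropoulou--Rourke mechanism for mixed links applies, since $\widehat{I}$ is fixed and unknotted, every interaction of the moving part with $\widehat{I}$ being pushable into one of these generalised forms. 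Finally, any instant occurring adjacent to an endpoint is, by the prohibition of the genuine (pseudo) forbidden moves, realisable only as one of the generalised fake forbidden moves of Figure~\ref{almo}.

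The main obstacle is the completeness of the generalised move set on the pseudo side, that is, verifying that $PGR_3$ (together with the moving-part pseudo moves) suffices and that no ``$PGR_2$'' is needed. The point to make precise is that a Reidemeister-II-type passage of the moving part across $\widehat{I}$ is an isotopy only when the two created crossings form a cancellable over/under bigon, which forces them to be real; a pair of pre-crossings, or a mixed pre-crossing/real pair, between $\widehat{I}$ and $\widetilde{K}$ cannot arise, because a node carries no over/under data against which to cancel. Hence the bigon events between the two parts are always $GR_2$, and the only new generalised event is the triangular one carrying a node past the fixed arc, which is exactly $PGR_3$. Once this is established, assembling the recorded instants into a finite sequence of moves of types (i)--(iii) completes the argument, in direct parallel with the proof of Theorem~\ref{RthmST}.
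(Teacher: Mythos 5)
The first thing to note is that the paper offers no proof of Theorem~\ref{RthmpST} at all: the statement appears immediately after the definition of mixed pseudo knotoids (with the preceding sentence even circularly defining pseudo mixed knotoid isotopy by reference to the theorem's own label), and the text then proceeds directly to the Kauffman bracket. The same is true of Theorem~\ref{RthmST}, which you lean on ``in direct parallel.'' So your proposal cannot agree or disagree with the paper's argument; it supplies one where the paper supplies none, and the route you choose --- general position on an equivalence fixing $\widehat{I}$ pointwise, in the Lambropoulou--Rourke style, combined with the pseudo Reidemeister theorem for events away from $\widehat{I}$ --- is the standard and appropriate one for this kind of statement. Your isolation of the genuinely new point, namely completeness of the generalized move set on the pseudo side (why $PGR_3$ suffices and no ``$PGR_2$'' is needed), is exactly the right place to concentrate, and your resolution --- crossings between $\widetilde{K}$ and $\widehat{I}$ are always real, so bigon events with the fixed strand are honest $GR_2$ moves --- is the correct reason.

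Two points need tightening before this is a proof. First, your ``only if'' direction starts from an ambient isotopy carrying one diagram to the other, under a node convention in which pre-crossings can never be created or destroyed. But pseudo knotoid equivalence is not ambient isotopy of a singular object: the pseudo RI move of Figure~\ref{reid} deletes a double point of the underlying immersed curve and is realized by no isotopy compatible with your convention. Consequently the equivalence you analyze (isotopy with indestructible nodes) is strictly finer than the isotopy in the theorem's statement, and a pair of diagrams related by a pseudo RI move falls outside your case analysis. The fix is to take as the definition of isotopy the equivalence generated by surface isotopies of the diagram on T together with local pseudo-Reidemeister moves on T; the local moves transfer verbatim into item (i), and your general position argument is then applied only to the surface-isotopy pieces, where it correctly produces the $GR_2$, $GR_3$, $PGR_3$ and generalized fake forbidden events. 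Second, your claim that pre-crossings between $\widehat{I}$ and $\widetilde{K}$ ``cannot arise'' is an assumption about the objects, not a consequence of cancellation: the paper's literal definition (``a mixed knotoid with some crossing information missing'') does not exclude pre-crossings on the fixed strand, and your completeness argument fails if they are permitted. You should state explicitly that, since the mixed diagram represents a pseudo knotoid on T, pre-crossings are by definition self-crossings of the moving part, and only then conclude that every crossing with $\widehat{I}$ carries genuine over/under data.
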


\begin{figure}[!ht]
\begin{center}
\includegraphics[width=1.6in]{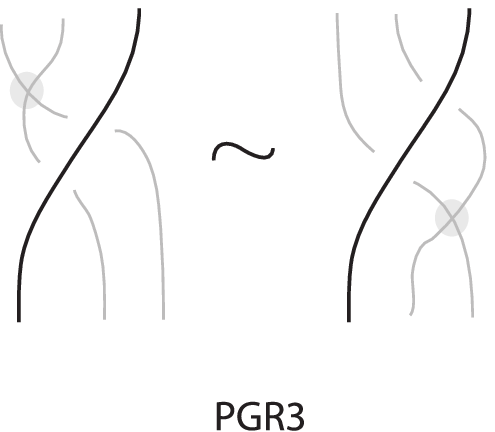}
\end{center}
\caption{The pseudo generalized Reidemeister 3 move.}
\label{pgr33}
\end{figure}

We may now extend the definition of the Kauffman bracket polynomial for mixed pseudo knotoids, in a similar way as in Definition~\ref{pkaufbst}. The main and only difference is that the orientation of a diagram in the case of mixed pseudo knotoids is needed in order to define a skein relation on pre-crossings (for more details the reader is referred to \cite{HD, D2}). More precisely, we have the following:

\begin{defn}\label{pkaufbst1}\rm
Let $L$ be an oriented mixed pseudo link in $S^2$. The {\it pseudo bracket polynomial} of $L$ is defined by means of the following relations:
\begin{figure}[H]
\begin{center}
\includegraphics[width=3.6in]{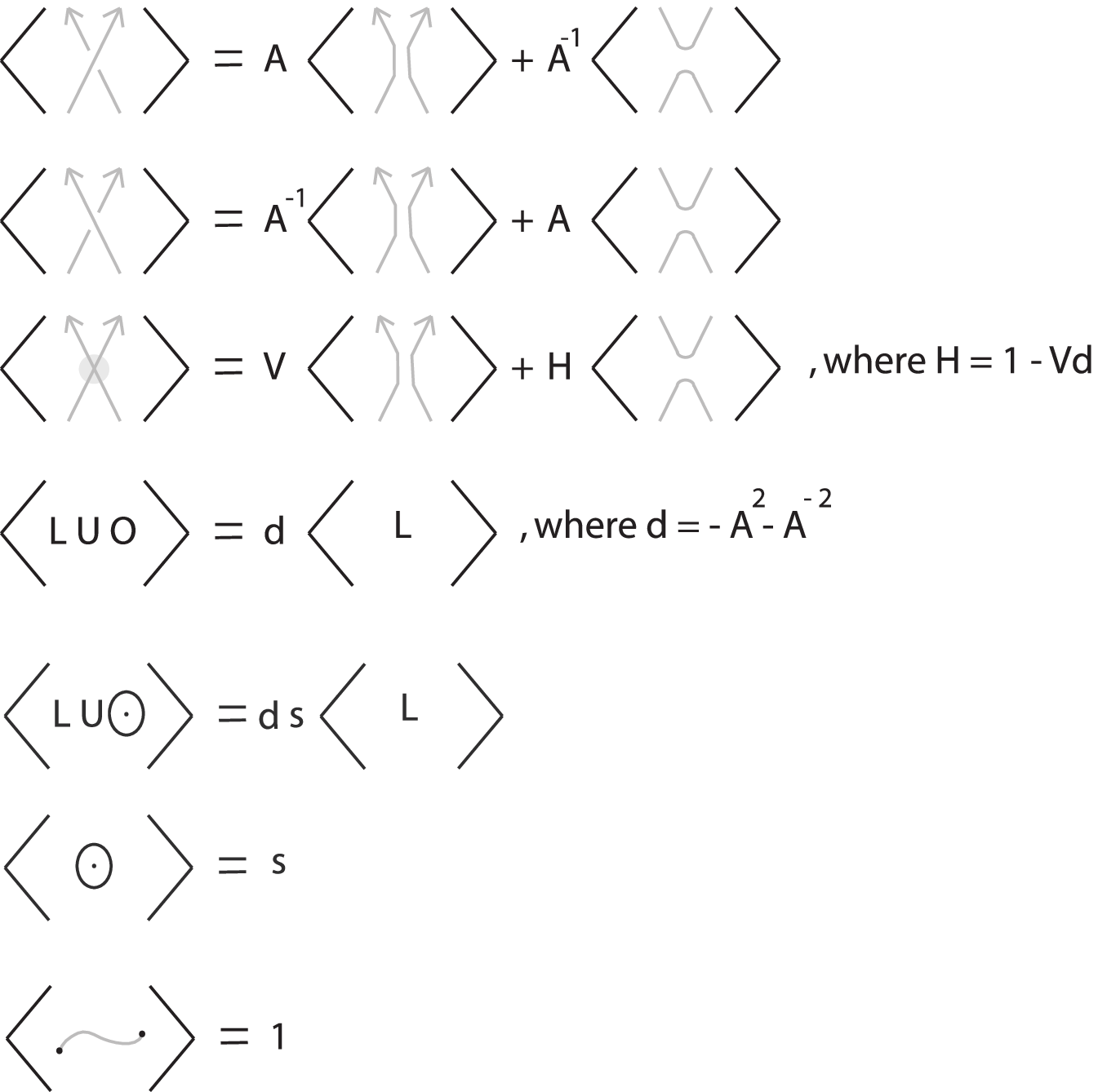}
\end{center}
\label{pkbstoi}
\end{figure}
\end{defn}

As in the case of mixed knotoids in $S^2$, we may normalize $<L>$ using the writhe of $L$ and we obtain the following:

\begin{thm}
Let $K$ be a mixed pseudo diagram of a mixed pseudo knotoid in $S^2$. The polynomial
\[
P_K(A, V, s)\ =\ (-A^3)^{w(K)}\, <K>,
\]
\noindent where $w(K)$ is the writhe of the mixed pseudo knotoid and $<K>$ the mixed pseudo bracket polynomial of $K$, is an invariant of mixed pseudo knotoids.
\end{thm}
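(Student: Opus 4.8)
The plan is to check directly that $P_K(A,V,s)$ is unchanged under each of the moves that generate mixed pseudo knotoid equivalence in Theorem~\ref{RthmpST}: the classical and pseudo Reidemeister moves confined to the moving part, the generalized moves $GR_2$, $GR_3$ and $PGR_3$, and the mixed fake forbidden moves. Since isotopy of mixed pseudo knotoids is generated by these moves, invariance under each of them yields that $P_K$ is a genuine invariant. The verification splits according to how a given move interacts with the skein relations defining $\langle K\rangle$ in Definition~\ref{pkaufbst1}, and for every move except RI we will in fact see that the unnormalized bracket $\langle K\rangle$ is already invariant, so that the writhe factor $(-A^3)^{w(K)}$ only needs to do its work at RI.

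First I would dispose of the regular-isotopy moves. Invariance of $\langle K\rangle$ under RII and RIII on the moving part is exactly the computation establishing the mixed knotoid bracket of Definition~\ref{pkaufbst}: one expands the relevant crossings by the $A$-smoothing relation, and the trivial loop produced collapses through the $(-A^2-A^{-2})$ factor to return the original state sum. The generalized moves $GR_2$ and $GR_3$ are handled by the identical local calculation, with one strand now belonging to the fixed part $\widehat{I}$; since the $A$-smoothing relation applies to mixed crossings as well, the only additional point is bookkeeping of the mixed-unknots. Here the local loop created in the expansion bounds a disk that can be chosen disjoint from the puncture, so it is an ordinary (non-mixed) unknot weighted by $(-A^2-A^{-2})$ and not by $s$; hence the power of $s$, and therefore $\langle K\rangle$, is unchanged. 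The mixed fake forbidden moves of Figure~\ref{almo} alter neither the crossings nor the set of states, so both $\langle K\rangle$ and $w(K)$ are trivially preserved.

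Next I would treat the pseudo moves. For the pseudo Reidemeister moves on the moving part and for $PGR_3$, every pre-crossing in the local picture is resolved through the $V$-relation of Definition~\ref{pkaufbst1}, the orientation of the diagram being precisely what makes this resolution well defined. One then expands all pre-crossings and real crossings in the local region and compares the two resulting linear combinations of states. I expect the main obstacle to lie here, specifically in the triple-strand pseudo Reidemeister move of Figure~\ref{reid} and the move $PGR_3$: these involve three strands and mix pre-crossings with real crossings (and, for $PGR_3$, with the fixed part), so the state expansion produces the largest number of terms and forces the $V$-relation and the $A$-relation to agree term by term. This term-by-term compatibility is exactly what the relations of Definition~\ref{pkaufbst1} are engineered to provide, and it is the only step not reducible to a routine repetition of the classical Kauffman bracket argument.

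Finally comes RI. A single curl multiplies $\langle K\rangle$ by $-A^{3}$ or by $-A^{-3}$ according to the standard one-crossing expansion, and simultaneously changes the writhe $w(K)$ by $-1$ or by $+1$ respectively. In either case $(-A^3)^{w(K)}$ absorbs the discrepancy exactly, so that $P_K=(-A^3)^{w(K)}\langle K\rangle$ is unchanged. Together with the regular-isotopy and pseudo-move verifications above, this shows $P_K(A,V,s)$ is invariant under every move of Theorem~\ref{RthmpST}, which completes the proof.
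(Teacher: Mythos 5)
Your overall strategy---checking invariance of $\langle K\rangle$ under each generating move of Theorem~\ref{RthmpST} and letting the factor $(-A^3)^{w(K)}$ absorb the RI discrepancy---is exactly what the paper intends, since the paper itself states the theorem without proof and leaves the analogous mixed-knotoid case to ``simple calculations''. Your treatment of RI, RII, RIII and the fake forbidden moves is fine. The genuine gap is in your handling of $GR_2$, $GR_3$ and $PGR_3$: you assert that ``the A-smoothing relation applies to mixed crossings as well'' and run the classical RII/RIII expansion on them. Under the paper's setup this step is not available, and it cannot be: a crossing between the moving part and the fixed part $\widehat{I}$ may not be smoothed, because smoothing it would merge the knotoid with the curve representing the complementary solid torus, and the result would no longer be a mixed (pseudo) knotoid. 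Worse, if mixed crossings were resolved by the $A$-relation, the fixed component would be destroyed in every state, no state could ever contain a mixed-unknot, and the variable $s$ (the $t^{|M|}$ of Eq.~\ref{sskbst}) would never occur; the polynomial would collapse to the ordinary pseudo bracket of the underlying multi-knotoid in $S^2$ and carry no information about the torus. The very presence of $s$ in $P_K(A,V,s)$ presupposes that states are formed by resolving only the self-crossings (real and pre-) of the moving part, leaving all mixed crossings intact.

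The correct argument for item (ii) of Theorem~\ref{RthmpST} is therefore different in kind from the RII/RIII computation: a $GR_2$, $GR_3$ or $PGR_3$ move induces a natural bijection between the states of the two diagrams, with identical coefficients $A^{\pm 1}$ and $V$, and it preserves whether each state loop links $\widehat{I}$ (a mixed-unknot, weight $s$) or bounds a disk in the complement of $\widehat{I}$ (weight $-A^2-A^{-2}$); hence $\langle K\rangle$ is unchanged state by state. Relatedly, your sentence about the local loop ``bounding a disk disjoint from the puncture'' conflates the paper's two pictures: in the punctured-torus picture there is a puncture but no fixed strand, hence no $GR$-moves at all, while in the $S^2$ picture, where $GR_2$, $GR_3$, $PGR_3$ live, there is no puncture but a fixed unknot that must remain pointwise fixed. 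Two smaller points: your verification of the pseudo moves (PR-moves and $PGR_3$) is asserted rather than carried out---saying the relations are ``engineered to provide'' the identity is circular, and this is precisely the computation that needs to be displayed; and your RI step silently pairs the bracket factor $-A^3$ with a writhe change of $-1$, which is the pairing needed for the paper's normalization $(-A^3)^{+w(K)}$ to work, so the crossing-sign and smoothing conventions under which that pairing holds should be made explicit.
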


\section{Mixed Braidoids}\label{broid11}

In this section we define {\it mixed braidoids} as a counterpart theory of mixed knotoids, just as mixed braids comprise an (algebraic) counterpart of mixed links. Assuming that the fixed strand $I$ is oriented downwards we may define a {\it mixed braidoid diagram} on $n$-strands, denoted by $I\cup B$, to be a braidoid diagram consisting of two disjoint sets of strands: one (fixed) strand forms the identity braid $I$ and it represents the complementary solid torus, and the other set of (moving) strands represents the knotoid on T and it consists of common braid strands and two free strands as in the case of classical braidoids (recall Definition~\ref{broiddefn}). For an illustration see the middle of Figure~\ref{lm}.

\smallbreak

We now translate isotopy for mixed knotoids on the level of mixed braidoids.

\smallbreak

\begin{defn}\label{broidisost}\rm
Two mixed braidoid diagrams are said to be isotopic if one can be obtained from the other by a finite sequence of the following moves, that we call {\it mixed braidoid isotopy} moves:
\smallbreak
\begin{itemize}
\item[$\bullet$] {\it Moving part:} We allow all moves of Definition~\ref{broidiso} on the moving strands of a mixed braidoid.
\smallbreak
\item[$\bullet$] {\it Fixed \& Moving part:} We allow the endpoints of the knotoid to be pushed or be pulled over or under the fixed strand of a mixed braidoid. This can be realized as a special case of vertical or swing moves and we call such moves {\it generalized swing moves}. The generalized swing moves translate the generalized fake forbidden moves in terms of mixed braidoids and they are illustrated in the bottom of Figure~\ref{almo1}. Moreover, the generalized Reidemeister moves are also allowed and they form a special case of $\Delta$-moves (for an illustration see the top part of Figure~\ref{almo1}).
\begin{figure}[ht]
\begin{center}
\includegraphics[width=5in]{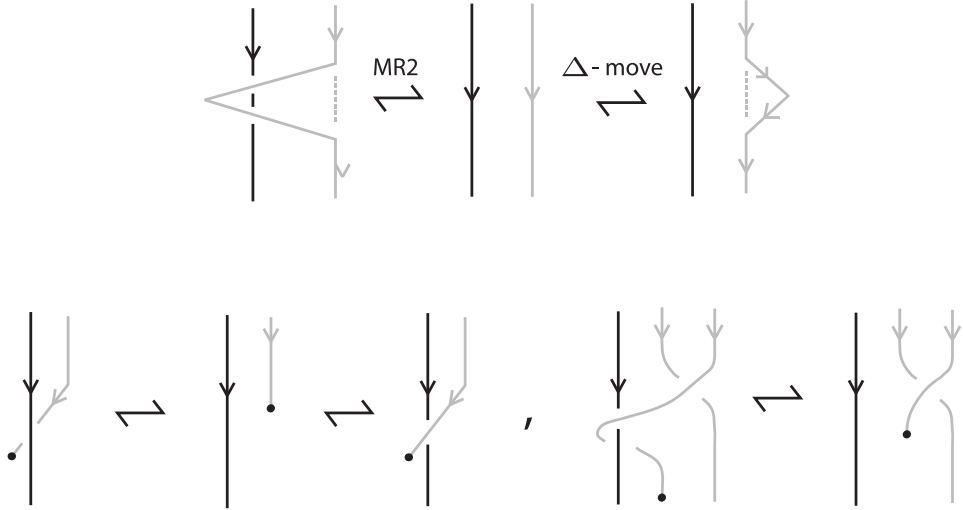}
\end{center}
\caption{The generalized Reidemeister move MR2 and the generalized swing moves.}
\label{almo1}
\end{figure}
\end{itemize}
\smallbreak
An isotopy class of mixed braidoid diagrams is called a {\it mixed braidoid}. Moreover, a {\it labeled mixed braidoid diagram} is a mixed braidoid diagram with a label over or under assigned to each pair of corresponding ends, not including the fixed strand.
\end{defn}

The reason why the fixed strand remains unlabeled will become clear in the next subsection.

\subsection{The closure operation}

We now define a closure operation on mixed braidoids, similar to that of mixed braids in handlebodies. The main difference is that the closure operation on a mixed braidoid should take into consideration the forbidden moves that may occur after this operation is applied on a mixed braidoid. We have the following:

\begin{defn}\label{clbd}\rm
The {\it closure} $\mathcal{C}(I \cup B)$ of a mixed braidoid $I\cup B$ is an operation that results in a (oriented) mixed knotoid in $S^2$ and is defined as follows: each pair of corresponding ends of the moving part of the mixed braidoid are joined by embedded arcs that run along the right hand-side of the vertical line of the corresponding mixed braidoid ends, either over or under the rest of the mixed braidoid and according to the label attached to these endpoints. It is crucial to note that in order to avoid forbidden moves on the closure of the mixed braidoids, the embedded arcs that appear after the closure operation is applied, lie closer to the mixed braidoid ends they connect compared to the distance of the leg or head from the vertical line passing through the pair of the mixed braidoid ends they connect. Note also that due to the generalized forbidden moves, the closure of the fixed strand may be realized as in the case of classical mixed links, that is, by an arc at infinity that identifies the two horizontal arcs that contain the endpoints of $I$. For an illustration see Figure~\ref{clmbr}. For this reason the fixed strand remains unlabeled. Finally, note that the endpoints of the mixed braidoid do not participate in the closure operation and they form the endpoints of the resulting mixed knotoid.
\end{defn}

\begin{figure}[ht]
\begin{center}
\includegraphics[width=5in]{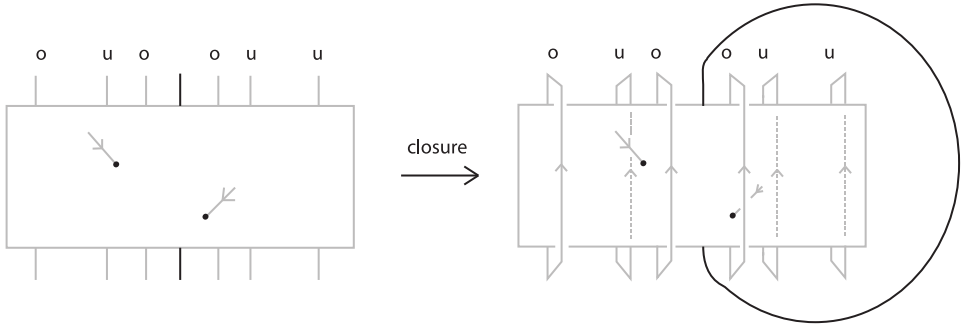}
\end{center}
\caption{The closure of a mixed braidoid.}
\label{clmbr}
\end{figure}

\begin{remark}\rm
Note that we may define the closure of $I$ as in the case of the moving strands of the mixed braidoid, where the label used is irrelevant. This follows from the generalized forbidden moves that allow the closing arc of the fixed strand to slide freely to the side of the mixed braidoid over or under the rest of the mixed braidoid as illustrated in Figure~\ref{clfstr1}.
\end{remark}

\begin{figure}[ht]
\begin{center}
\includegraphics[width=5in]{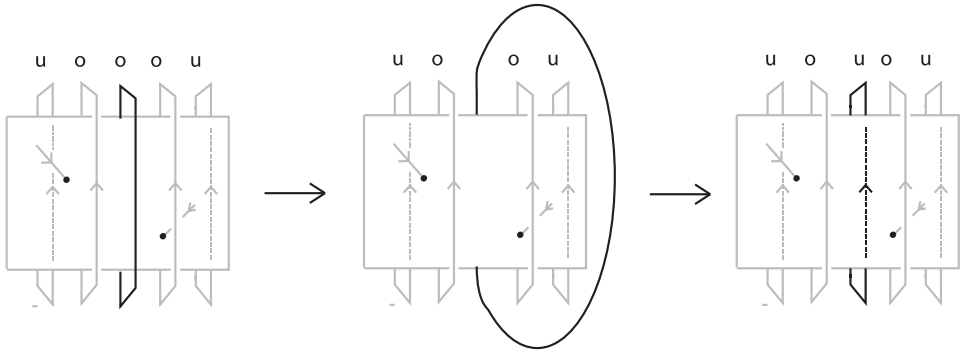}
\end{center}
\caption{The unlabelled closure of the fixed strand.}
\label{clfstr1}
\end{figure}

Note now that isotopy moves on (labeled) mixed braidoids can be translated to isotopy moves on the resulting mixed knotoid diagrams, and thus, we have the following result:

\begin{prop}
The closure operation of Definition~\ref{clbd} is a well defined map:
\[
\mathcal{C}\, :\, I\cup B\ \rightarrow\ \widehat{I} \cup L, 
\]
\noindent where $I\cup B$ denotes the set of (labeled) mixed braidoids and $\widehat{I} \cup L$ denotes the set mixed knotoid diagrams.
\end{prop}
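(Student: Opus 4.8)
The plan is to show that the closure recipe of Definition~\ref{clbd} descends to isotopy classes, i.e.\ that whenever two labeled mixed braidoid diagrams are related by the mixed braidoid isotopy moves of Definition~\ref{broidisost}, their closures are related by the mixed knotoid isotopy moves of Theorem~\ref{RthmST}. Applying the closure recipe to a \emph{fixed} diagram produces a uniquely determined mixed knotoid diagram, so the only content in ``well defined'' is that the value on the isotopy class of $I\cup B$ does not depend on the chosen representative. Hence it suffices to verify that each \emph{generating} move of mixed braidoid isotopy is carried by $\mathcal{C}$ to a finite sequence of the moves listed in Theorem~\ref{RthmST}. I would organize the verification according to the two families of moves in Definition~\ref{broidisost}.

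First I would treat the \emph{moving part} moves, i.e.\ the braidoid $\Delta$-moves, vertical moves and swing moves applied to the moving strands. These are exactly the braidoid isotopy moves of Definition~\ref{broidiso} restricted to the moving part, so the classical braidoid theory applies verbatim: by Theorem~\ref{brisomark} a $\Delta$-move closes to a sequence of Reidemeister moves RI--RIII taking place away from the endpoints, while vertical and swing moves close to planar isotopy of the moving part, again away from the endpoints. Thus all moving-part moves close to moves of type (i) in Theorem~\ref{RthmST}. The only point requiring care is that none of these closures introduces a \emph{genuine} forbidden move; this is guaranteed by the positioning convention built into Definition~\ref{clbd}, namely that each closing arc is drawn closer to the pair of braidoid ends it joins than the leg or head is to the corresponding vertical line, so that a strand adjacent to an endpoint is never dragged across a transversal arc.

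Next I would treat the \emph{fixed and moving part} moves. A generalized Reidemeister move is a special $\Delta$-move straddling the fixed strand $I$ (top of Figure~\ref{almo1}), and upon closure it reproduces precisely the generalized Reidemeister moves $GR_2$ and $GR_3$ of Figure~\ref{mreidm}, i.e.\ moves of type (ii). A generalized swing move pushes or pulls an endpoint of the knotoid past the fixed strand; by construction it closes to a generalized (mixed) fake forbidden move of Figure~\ref{almo}, i.e.\ a move of type (iii). Finally, I would check that the closure of the fixed strand itself is insensitive to the irrelevant choice used to close it: by the remark following Definition~\ref{clbd} the generalized forbidden moves allow the closing arc of $I$ to slide freely over or under the rest of the diagram (Figure~\ref{clfstr1}), so the arc-at-infinity closure of $I$ yields the same mixed knotoid up to the moves of Theorem~\ref{RthmST}, regardless of how it is realized. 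This confirms that $I$ may be left unlabeled without affecting the isotopy class of $\mathcal{C}(I\cup B)$.

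The main obstacle is the endpoint bookkeeping near the fixed strand: one must check that the interaction of a closing arc, a knotoid endpoint and the fixed strand $I$ always produces a \emph{fake} forbidden move rather than a genuine forbidden move, since a genuine forbidden move would change the isotopy type of the knotoid and break well-definedness. The positioning convention of Definition~\ref{clbd} is exactly what rules this out, but verifying it requires a careful case analysis of the relative heights of the endpoint, the braidoid ends and the fixed strand, together with the fake swing moves already used in Theorem~\ref{brisomark}. Once this local analysis is in place, assembling the individual move-by-move translations shows that isotopic labeled mixed braidoids have isotopic closures, and therefore $\mathcal{C}$ is well defined on the set of (labeled) mixed braidoids.
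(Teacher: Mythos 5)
Your proposal is correct and follows essentially the same route as the paper, which justifies the proposition solely by the observation that each mixed braidoid isotopy move (moving-part moves, generalized Reidemeister moves, generalized swing moves) translates under closure into a corresponding mixed knotoid isotopy move of Theorem~\ref{RthmST}, with the positioning convention of Definition~\ref{clbd} and the unlabeled closure of the fixed strand handling the endpoint and forbidden-move issues. Your write-up is in fact more detailed than the paper's, which states this translation in a single sentence preceding the proposition rather than as a move-by-move verification.
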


We conclude this subsection with an important theorem which comprise the bridge between mixed knotoids and mixed braidoids, namely, the analogue of the Alexander theorem for mixed knotoids.

\begin{thm}[{\bf The analogue of the Alexander theorem for mixed knotoids}]\label{alexbrst}
Any mixed knotoid diagram is isotopic to the closure of a (labeled) mixed braidoid diagram.
\end{thm}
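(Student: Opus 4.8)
The plan is to adapt the braidoiding algorithm of \cite{GL1} (recalled in \S\ref{broid}) to the mixed setting, treating the fixed strand $\widehat{I}$ and the moving part $\widetilde{K}$ uniformly as oriented arcs, while paying attention to the two extra issues that the mixed framework introduces: the fact that $\widehat{I}$ must remain a fixed identity braid upon braidoiding, and the fact that the generalized (mixed) fake forbidden moves of Figure~\ref{almo} must be respected near the endpoints. First I would fix a height function on the plane of $\widehat{I}\cup\widetilde K$ so that the top-to-bottom direction of $I$ agrees with the braidoid $t$-axis. By a small isotopy (a planar perturbation) I may assume the whole diagram consists of finitely many arcs, each monotone either downward or upward, with crossings isolated, and that the two endpoints of the knotoid lie at generic heights away from the crossings.

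The core step is to identify the \emph{up-arcs} of the entire mixed diagram, exactly as in Figure~\ref{upa}, now including possible up-arcs of the fixed strand $\widehat I$. Each up-arc is subdivided so that it carries crossings of a single type, labeled ``o'' or ``u'' accordingly (free up-arcs receiving an arbitrary label), and then eliminated by the corresponding $o$- or $u$-braiding move of Figure~\ref{ahg}, replacing the upward arc by a pair of descending braid strands with a pair of corresponding endpoints. Since the braidoiding move of \cite{GL1} is identical to the classical Lambropoulou--Rourke braiding move of \cite{LR1} on ordinary strands and is the established free-strand move near the two knotoid endpoints, applying it arc-by-arc to $\widehat I\cup\widetilde K$ produces a mixed braidoid $I\cup B$ whose closure, by construction and by the well-definedness of $\mathcal{C}$ established in the preceding Proposition, recovers the original mixed knotoid up to the isotopy of Theorem~\ref{RthmST}. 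For the fixed strand I would additionally note that, because the unlabeled closure of $I$ slides freely over or under the rest of the diagram (Remark on Figure~\ref{clfstr1}, via the generalized forbidden moves), any up-arcs of $\widehat I$ can be braided away with either label without affecting the closure, so the braidoided fixed strand may be isotoped back to the identity braid $I$ representing the complementary solid torus.

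The main obstacle I expect is precisely at the two endpoints: the leg and head must not participate in the closure, and braiding an up-arc adjacent to an endpoint risks producing a diagram whose closure triggers a genuine forbidden move rather than a permissible (mixed) fake forbidden move. The way I would handle this is the observation already built into Definition~\ref{clbd}, namely that the closing arcs lie closer to the braidoid ends than the endpoints are to the vertical line through those ends; combined with the free-strand braiding move near an endpoint, this guarantees that the resulting closure differs from the input only by the moves of Theorem~\ref{RthmST}, i.e.\ by classical Reidemeister moves on the moving part, the generalized moves $GR_2, GR_3$, and generalized fake forbidden moves, never by a true forbidden move.

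Putting these together, the algorithm terminates after finitely many braiding moves (one per up-arc, and there are finitely many up-arcs), yields a labeled mixed braidoid $I\cup B$, and $\mathcal{C}(I\cup B)$ is isotopic to the starting mixed knotoid; this establishes the claim. The verification that each individual braiding move does not create an illegal closure, and that the fixed strand can be restored to the identity, is the delicate point, but it follows step-by-step from the corresponding statements for knotoids in \cite{GL1} together with the generalized forbidden moves of \S\ref{knoidST}.
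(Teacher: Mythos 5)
Your overall strategy is the paper's strategy: run the braidoiding algorithm of \cite{GL1} on the mixed diagram and check that the closure of the output recovers the input. But there is a genuine flaw in your treatment of the fixed strand, and it is exactly the point where the paper's (very short) proof hinges on a different observation. You propose to treat $\widehat{I}$ uniformly with the moving part, braid away ``possible up-arcs of $\widehat{I}$,'' and then isotope ``the braidoided fixed strand back to the identity braid $I$.'' This last step does not go through. Each braiding move applied to an up-arc of $\widehat{I}$ replaces that arc by a \emph{new pair} of descending strands with corresponding ends, so after your procedure the fixed part is represented by several strands, not by the single identity strand $I$; the resulting object is therefore not a mixed braidoid diagram in the sense of Definition~\ref{broidisost} at all. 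Moreover, mixed braidoid isotopy keeps the fixed strand pointwise fixed, so ``isotoping back to $I$'' is not an available move; undoing a braiding move at the braidoid level is an $L$-move--type (Markov-level) operation, which belongs to Theorem~\ref{brisomarkst}, not to the Alexander-type statement you are proving. The appeal to the sliding of the unlabeled closure of $I$ (Figure~\ref{clfstr1}) concerns the closing arc of the fixed strand, not the elimination of extra braid strands created inside the braidoid.

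The paper avoids this entirely by the opposite ordering: one first places the mixed knotoid diagram so that $\widehat{I}$ is already in braided position --- the plane of the diagram is equipped with the top-to-bottom direction of $I$, and since $\widehat{I}$ is an unknot its diagram can be taken to be a single descending strand together with the closing arc, which is never touched. With this normalization, \emph{all up-arcs of the mixed knotoid diagram lie in the moving part}, so the braidoiding algorithm of \cite{GL1} applies verbatim and never affects $\widehat{I}$; the output is automatically a mixed braidoid with fixed part $I$. Your argument can be repaired by moving your ``restore the fixed strand'' step to the front: isotope $\widehat{I}$ into standard descending position \emph{before} braiding (this is an isotopy of the mixed knotoid, permitted by Theorem~\ref{RthmST}), and then your remaining analysis --- the handling of the endpoints via the closure convention of Definition~\ref{clbd} and the free-strand braiding moves, which is correct and indeed more detailed than what the paper writes --- completes the proof.
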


\begin{proof}
The proof is similar to the braidoiding algorithm presented in \cite{GL1}. This braidoiding algorithm may be applied in the case of mixed knotoids since it will not affect their fixed part $\widehat{I}$. This is because the up-arcs on a mixed knotoid are arcs of its moving part. The result follows.
\end{proof}

\begin{remark}\rm
It is worth mentioning that the braiding algorithm of \cite{La2} may also be applied for the case of mixed braidoids. This is because of the definition of the closure of a mixed braidoid. The same braiding algorithm has been applied in \cite{D2} for the case of pseudo links and singular links in the Solid Torus.
\end{remark}

\subsection{An analogue of the Markov theorem for mixed knotoids}

In this subsection we formulate and prove a geometric analogue of the Markov theorem for mixed knotoids. For this we need to introduce first the $L$-moves on mixed braidoids. $L$-moves make up an important tool for formulating braid equivalences in any topological setting and they prove to be particularly useful in settings where the sets of braid analogues do not have a ``nice'' algebraic structure. The innterested reader is referred to \cite{LR1, La1}, where $L$-moves and braid equivalence theorems are presented for different knot theories.

\begin{defn}\label{lmdefn}\rm
An {\it $L$-move} on a labeled mixed braidoid $I\cup B$, is an $L$-move on a moving strand of the mixed braidoid, as illustrated in Figure~\ref{lm} by omitting the labels on the moving strands of the mixed braidoid.
\end{defn} 

\begin{figure}[ht]
\begin{center}
\includegraphics[width=4.9in]{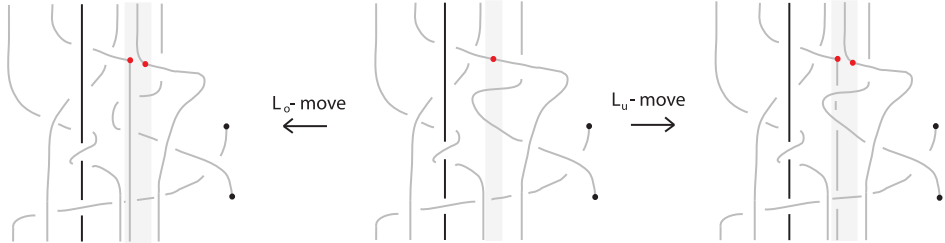}
\end{center}
\caption{$L$-moves on mixed braidoids.}
\label{lm}
\end{figure}

The $L$-moves allow us to formulate and prove a geometric analogue of the Markov theorem for mixed braidoids. More precisely, we have the following theorem:

\begin{thm}[{\bf Geometric analogue of the Markov theorem for mixed braidoids}]\label{brisomarkst}
Two (oriented) mixed knotoids are isotopic if and only if any two corresponding mixed braoidoids of theirs differ by a finite sequence of $L$-moves and isotopies of mixed braidoids.
\end{thm}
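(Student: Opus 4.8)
The plan is to follow the classical two-direction strategy used for all geometric Markov theorems (Lambropoulou--Rourke, and its braidoid incarnation Theorem~\ref{brisomark}), adapting it to the presence of the fixed strand. The easy direction is to verify that each generating move of mixed braidoid equivalence closes to an isotopy of mixed knotoids. For this I would check move-by-move, using the closure of Definition~\ref{clbd}, that: an $L$-move (Definition~\ref{lmdefn}) closes to a detour of a moving strand, i.e. a sequence of moving-part Reidemeister moves from the list in Theorem~\ref{RthmST}; the moving-part isotopy moves of Definition~\ref{broidisost} close to the moving-part Reidemeister moves and to moves away from the endpoints; the generalized swing moves close to the generalized fake forbidden moves of Figure~\ref{almo}; and the generalized $\Delta$-moves close to the $GR_2$ and $GR_3$ moves of Figure~\ref{mreidm}. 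Since every resulting move lies in the list of Theorem~\ref{RthmST}, the closures are isotopic mixed knotoids.

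For the converse (the substantive direction), I would start from two isotopic mixed knotoid diagrams $\widehat{I}\cup\widetilde{K_1}$ and $\widehat{I}\cup\widetilde{K_2}$ and use Theorem~\ref{RthmST} to factor the isotopy between them as a finite sequence of elementary moves: moving-part Reidemeister moves, the generalized moves $GR_2,GR_3$, the generalized fake forbidden moves, together with planar isotopies and general-position moves of the diagram with respect to the height function. Before exploiting this factorization I would first establish that the braidoiding algorithm of Theorem~\ref{alexbrst} is well defined up to $L$-equivalence, i.e. that the various choices in the algorithm do not matter up to mixed braidoid $L$-equivalence. Concretely, I would show that the choice of subdivision of each up-arc, the arbitrary labelling of free up-arcs, and the placement of the diagram relative to the basepoint and the height function all produce mixed braidoids that differ by $L$-moves and mixed braidoid isotopies. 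The guiding principle, already emphasized in \S~\ref{broid}, is the tight similarity between the braidoiding moves and the $L$-moves: adding or deleting the braiding strands of a free up-arc, or merging two adjacent up-arcs of the same type, is exactly an $L_o$- or $L_u$-move followed by a braidoid isotopy. Because the algorithm never touches the fixed part $\widehat{I}$ (all up-arcs are arcs of the moving part, as in the proof of Theorem~\ref{alexbrst}), these independence statements reduce to the corresponding statements for ordinary braidoids underlying Theorem~\ref{brisomark}.

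With well-definedness in hand, I would analyze the effect of each elementary move on the braidoided diagrams, comparing the braidoiding of the ``before'' picture with that of the ``after'' picture inside a disk containing the move. For the moving-part moves the analysis is identical to that of Theorem~\ref{brisomark}, producing $L$-moves and braidoid isotopies on the moving strands. The moves involving the fixed strand, namely $GR_2,GR_3$ and the generalized fake forbidden moves, change only how moving strands cross the passive fixed strand; since the fixed strand is never braided and stays unlabeled, these translate directly into the generalized $\Delta$-moves and generalized swing moves of Definition~\ref{broidisost}, i.e. into mixed braidoid isotopies, with no $L$-move on the fixed strand required. Composing all the local equivalences along the factorization yields an $L$-equivalence between one braidoiding of $K_1$ and one braidoiding of $K_2$; combined with the well-definedness step, this upgrades to the claimed $L$-equivalence between \emph{any} two corresponding mixed braidoids.

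The main obstacle I expect is precisely the interaction of the moving strands with the fixed strand. Everything must be arranged so that the fixed strand stays passive and unlabeled throughout: one must check that the closing arc at infinity of $\widehat{I}$ (Definition~\ref{clbd}) can always be kept disjoint from the braiding region, that no elementary move forces a braiding strand to be created on the fixed strand, and that the generalized fake forbidden moves are genuinely absorbed by generalized swing moves rather than requiring a new relation. Handling these points carefully, in the same spirit as the treatment of the fixed strand for mixed links in \cite{LR1} and \cite{La1}, is what lifts the ordinary braidoid Markov theorem (Theorem~\ref{brisomark}) to its mixed counterpart.
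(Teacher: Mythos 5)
Your proposal is correct in substance but takes a genuinely different route from the paper. You carry out the full Lambropoulou--Rourke/G\"ug\"umc\"u--Lambropoulou machinery directly in the mixed setting: the easy closure-of-moves direction, then well-definedness of the braidoiding algorithm up to $L$-equivalence, then a one-move analysis of each elementary move of Theorem~\ref{RthmST}, all while keeping the fixed strand passive. The paper does none of this from scratch: it invokes the \emph{relative} version of the Markov theorem of \cite{LR1} (Theorem~4.7 there), transfers it to braidoids as a proposition inside the proof (justified by the similarity of the braidoiding algorithm of \cite{GL1} to the braiding algorithm of \cite{LR1}, and of the braidoiding moves to $L$-moves), and then makes the single key observation that the fixed strand $I$ is a common braided portion of all mixed braidoids which, by Theorem~\ref{RthmST}, remains pointwise fixed under any isotopy of mixed knotoids; the theorem follows immediately. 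Your approach buys self-containedness --- it makes explicit why no braiding strand is ever created on $I$, why the closing arc at infinity avoids the braiding region, and how the generalized moves are absorbed --- but at the cost of redoing the well-definedness and move-by-move analysis, which is the bulk of \cite{LR1, GL2}; the paper's reduction is far shorter and isolates the one genuinely new idea (the fixed strand as a preserved common braided portion), at the price of resting on a relative statement asserted by analogy rather than reproved. One small overstatement to fix on your side: a $GR_2$ or $GR_3$ move performed on an up-arc of the moving part changes the set of up-arcs, so such moves need not close up to mixed braidoid isotopies alone but may also require $L$-moves on moving strands; this is harmless for your final conclusion (which only claims an $L$-equivalence), but the phrase ``translate directly into mixed braidoid isotopies'' should be weakened accordingly.
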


\begin{proof}
In \cite{LR1} the authors prove a relative version of the analogue of the Markov theorem for knots and links in $S^3$ (Theorem~4.7 \cite{LR1}). More precisely, they prove that two oriented link diagrams that contain a common braided portion $B$ are isotopic if and only if corresponding braids of theirs, that are obtained through the braiding algorithm of \cite{LR1}, differ by $L$-moves that do not affect the common braided portion $B$. This result is due to the 1-move Markov theorem for knots and links in $S^3$ using the $L$-moves and the braiding algorithm applied. Since now the braidoiding algorithm of \cite{GL1} is similar to that in \cite{LR1} and since the braidoiding moves applied on a knotoid in the braidoiding algorithm are similar to the $L$-moves, it follows that the same result holds for knotoids. That is:

\begin{prop}[{\bf Relative version of the Markov theorem for braidoids}]
Let $L_1, L_2$ be knotoids in $S^2$, both containing a common braided portion $B$, and let $L_1$ be isotopic to $L_2$, where the isotopy involved finishes with a homeomorphism fixed on $B$. Then, if $B_1, B_2$ are two braidoids obtained from the braidoiding algorithm of \cite{GL1} applied on $L_1, L_2$ respectively, both containing the braid $B$, then $B_1$ is $L$-equivalent to $B_2$ by moves that do not affect the braid $B$.
\end{prop}

Recall now that all mixed braidoids have a common braided part, $I$, which remains fixed throughout an isotopy of two mixed knotoid diagrams (recall Theorem~\ref{RthmST}). The result follows.
\end{proof}

\subsection{Mixed pseudo braidoids}

In this subsection we introduce the theory of {\it mixed pseudo braidoids} as the counterpart theory of mixed pseudo knotoids. A mixed pseudo braidoid is defined to be a (labeled) mixed braidoid with some crossing information missing. The closure operation on a mixed pseudo braidoid is defined in the same way as in the case of mixed braidoids by ignoring the pre-crossings.

\smallbreak

The braidoiding algorithm of \cite{GL1}, that was also applied in the case of mixed braidoids, may also be applied on pseudo mixed knotoids. In order to use this algorithm on mixed pseudo knotoids we need to deal with pre-crossings in the diagram which contain at least one up-arc first. For this we apply the idea used in \cite{KL} for the case of virtual knots. Namely, before we apply the braidoiding algorithm we have to isotope the mixed pseudo knotoid in such a way that the pre-crossings will only contain down-arcs, so that the braidoiding algorithm will not affect them. This is achieved by rotating all pre-crossings that contain at least one up-arc, so that the two arcs are now directed downward, as illustrated in Figure~\ref{tw}. Then we may apply the braidoiding algorithm of \cite{LR1, GL1} for the mixed knotoid by ignoring the pre-crossings.

\begin{figure}[ht]
\begin{center}
\includegraphics[width=4.7in]{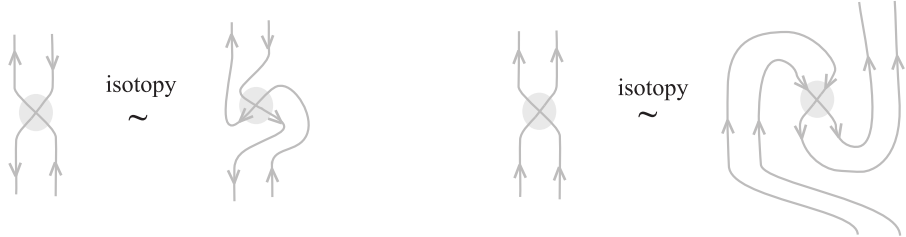}
\end{center}
\caption{Rotating pre-crossings.}
\label{tw}
\end{figure}

The discussion above provides a proof for the following theorem:
 
\begin{thm}[{\bf The analogue of the Alexander theorem for mixed pseudo knotoids}]\label{newprpkalex}
Every oriented mixed pseudo knotoid is isotopic to the closure of a mixed pseudo braidoid.
\end{thm}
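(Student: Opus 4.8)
The plan is to reduce the theorem for mixed pseudo knotoids to the already-established analogue of the Alexander theorem for mixed knotoids (Theorem~\ref{alexbrst}), by showing that the braidoiding algorithm of \cite{GL1} can be adapted to respect both the fixed strand $\widehat{I}$ and the pre-crossings of a mixed pseudo knotoid. The key observation is that the braidoiding algorithm operates by locating the up-arcs of a diagram (with respect to the height function) and replacing them with braid strands via $o$- and $u$-braiding moves, while leaving down-arcs untouched. Thus the two issues to control are: first, that the fixed part $\widehat{I}$ is never disturbed, and second, that no pre-crossing is affected by the braiding moves.

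First I would recall, exactly as in the proof of Theorem~\ref{alexbrst}, that the fixed strand presents no obstacle: since $\widehat{I}$ forms the unknotted complementary solid torus and all up-arcs requiring braiding belong to the moving part of the mixed pseudo knotoid, the braidoiding moves never act on $\widehat{I}$, and its identity-braid form is preserved. The substantive new content therefore concerns the pre-crossings. The danger is a pre-crossing that is incident to at least one up-arc: applying a braiding move to that up-arc would drag the pre-crossing into the braided portion and could force a pseudo forbidden move (recall Figure~\ref{fmb2}) or otherwise alter the crossing information we are obligated to preserve.

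The main step is then to preprocess the diagram so that every pre-crossing is incident only to down-arcs, following the rotation idea of \cite{KL} used for virtual knots. Concretely, I would isotope the mixed pseudo knotoid so that each pre-crossing whose local picture contains an up-arc is rotated in a small disk neighborhood until both of its incident arcs are oriented downward, as depicted in Figure~\ref{tw}; this rotation is a legitimate isotopy move (it takes place in the moving part and away from the endpoints and from $\widehat{I}$) and converts the offending up-arcs near the pre-crossing into down-arcs without changing the crossing data. After this normalization every pre-crossing lives entirely among down-arcs, which the braidoiding algorithm leaves unaffected.

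Having arranged this, I would apply the braidoiding algorithm of \cite{LR1, GL1} to the remaining genuine up-arcs exactly as in the mixed (non-pseudo) case, simply ignoring the pre-crossings throughout, since they now sit on down-arcs and are invisible to the braiding moves. The result is a mixed pseudo braidoid whose closure, by construction of the closure operation (which likewise ignores pre-crossings), recovers the original mixed pseudo knotoid up to isotopy. \textbf{The hard part} is verifying that the preliminary rotation can always be carried out compatibly at every pre-crossing without re-introducing up-arcs elsewhere or colliding with the endpoints or with $\widehat{I}$; once one accepts the local nature of this rotation and the fact that only finitely many pre-crossings occur, the remainder of the argument is an immediate transcription of Theorem~\ref{alexbrst}.
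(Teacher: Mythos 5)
Your proposal is correct and follows essentially the same route as the paper: the paper likewise invokes the rotation trick of \cite{KL} (Figure~\ref{tw}) to turn every pre-crossing incident to an up-arc into one incident only to down-arcs, and then applies the braidoiding algorithm of \cite{LR1, GL1} ignoring the pre-crossings, with the fixed strand $\widehat{I}$ untouched exactly as in Theorem~\ref{alexbrst}. Your write-up is in fact somewhat more careful than the paper's, which presents this as a brief discussion preceding the theorem rather than a detailed proof.
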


\section{Conclusions \& Further Research}

In this paper we introduce and study the theory of mixed knotoids and mixed pseudo knotoids on $S^2$. In particular, we study how isotopy is translated in terms of moves on mixed (pseudo) knotoid diagrams and we also extend the Kauffman bracket polynomial for mixed knotoids and for mixed pseudo knotoids. We then introduce the notions of mixed braidoids and mixed pseudo braidoids, and using a braidoiding algorithm, we relate mixed braidoids to mixed knotoids and mixed pseudo braidoids to mixed pseudo knotoids. Finally, we formulate and prove a geometric analogue of the Markov theorem using the $L$-moves for the case of mixed braidoids. Note that a similar construction would lead to the theory of {\it knotoids on handlebodies}, which is the subject of a sequel paper.

\smallbreak

Moreover, in \cite{D1} the theory of {\it tied pseudo links} is introduced and in \cite{D} the theory of {\it tied links} is extended to other 3-manifolds. Tied links are classical links equipped with {\it ties}, that is, non-embedded arcs joining some components of the link. Our intention is to study the theories of {\it tied knotoids} and {\it tied pseudo knotoids} in $S^2$.

\end{document}